\documentclass{amsart}
%%%%%%%%%%%%%%%%%%%%%%%%%%%%%%%%%%%%%%%%%%
\usepackage{mathptmx}
\usepackage{amssymb}
\usepackage{amsfonts}
\usepackage{amsmath}
\usepackage{graphicx}
\usepackage{shadow}
\usepackage{color}
\usepackage[all]{xy}
%%%%%%%%%%%%%%%%%%%%%%%%%%%%%%%%%%%%%%%%%%
\usepackage[pagebackref]{hyperref}
%%%%%%%%%%%%%%%%%%%%%%%%%%%%%%%%%%%%%%%%%%
\newtheorem{thm}{Theorem}[section]
\newtheorem{corollary}[thm]{Corollary}
\newtheorem{lemma}[thm]{Lemma}
\newtheorem{proposition}[thm]{Proposition}

\theoremstyle{definition}

\newtheorem{example}[thm]{Example}

\theoremstyle{remark}
\newtheorem{remark}[thm]{Remark}
\newtheorem{question}[thm]{Question}

\newtheorem{claim}{\sc Claim}
%%%%%%%%%%%%%%%%%%%%%%%%%%%%%%%%%%%%%%%%%%
\newcommand{\field}[1]{\mathbb{#1}}

\newcommand{\Q }{\field{Q}}
\newcommand{\Z }{\field{Z}}

%%%%%%%%%%%%%%%%%%%%%%%%%%%%%%%%%%%%%%%%%%
\DeclareMathOperator{\Nil}{Nil}
\DeclareMathOperator{\Ze}{Z}
\DeclareMathOperator{\J}{J}
\DeclareMathOperator{\U}{U}
\DeclareMathOperator{\Ann}{Ann}
\DeclareMathOperator{\Max}{Max}

\DeclareMathOperator{\Hom}{Hom}
\DeclareMathOperator{\Ker}{Ker}

\DeclareMathOperator{\Spec}{Spec}

\DeclareMathOperator{\w.dim}{w.gl.dim}
\DeclareMathOperator{\fd}{fd}
%%%%%%%%%%%%%%%%%%%%%%%%%%%%%%%%%%%%%%%%%%%%%%%%%%%%%%%%%
\def\m{\frak{m}}
%%%%%%%%%%%%%%%%%%%%%%%%%%%%%%%%%%%%%%%%%%%%%%%%%%%%%%%%%
\makeatletter
  \newcounter{xenumi}
  \newenvironment{xenumerate}{%
  \begin{list}{(\arabic{xenumi})}{
    \setcounter{xenumi}{1}\usecounter{xenumi}
    \setlength{\parsep}{4\p@ \@plus2\p@ \@minus\p@}
    \setlength{\topsep}{6\p@ \@plus2\p@ \@minus2\p@}
    \setlength{\itemsep}{2\p@ \@plus1\p@ \@minus\p@}
    \setlength{\labelwidth}{0mm}
    \setlength{\labelsep}{2mm}
    \setlength{\itemindent}{2mm}
    \setlength{\leftmargin}{0mm}
    \setlength{\listparindent}{0mm}
  }}{\end{list}}
\makeatother
%%%%%%%%%%%%%%%%%%%%%%%%%%%%%%%%%%%%%%%%%%%%%%%%%%%%%%%%%
\def\1{{\rm (1)}}
\def\2{{\rm (2)}}
\def\3{{\rm (3)}}
\def\4{{\rm (4)}}
\def\5{{\rm (5)}}
\def\6{{\rm (6)}}
%%%%%%%%%%%%%%%%%%%%%%%%%%%%%%%%%%%%%%%%%%%%%%%%%%%%%%%%%

\begin{document}
%%%%%%%%%%%%%%%%%%%%%%%%%%%%%%%%%%%%%%%%%%%%%%%%%%%%%%%%%%%%%%%%%%%%%%%%%%%%%%%%%%%%%%%%%%%%%%%%%%%%%%%%%%%%%%%%%%%%%%%%%%%%%%%%%%%%%%%%%%%%%%
%%%%%%%%%%%%%%%%%%%%%%%%%%%%%%%%%%%%%%%%%%%%%%%%%%%%%%%%%%%%%%%%%%%%%%%%%%%%%%%%%%%%%%%%%%%%%%%%%%%%%%%%%%%%%%%%%%%%%%%%%%%%%%%%%%%%%%%%%%%%%%
%%%%%%%%%%%%%%%%%%%%%%%%%%%%%%%%%%%%%%%%%%%%%%%%%%%%%%%%%%%%%%%%%%%%%%%%%%%%%%%%%%%%%%%%%%%%%%%%%%%%%%%%%%%%%%%%%%%%%%%%%%%%%%%%%%%%%%%%%%%%%%
%%%%%%%%%%%%%%%%%%%%%%%%%%%%%%%%%%%%%%%%%%%%%%%%%%%%%%%%%%%%%%%%%%%%%%%%%%%%%%%%%%%%%%%%%%%%%%%%%%%%%%%%%%%%%%%%%%%%%%%%%%%%%%%%%%%%%%%%%%%%%%
%%%%%%%%%%%%%%%%%%%%%%%%%%%%%%%%%%%%%%%%%%%%%%%%%%%%%%%%%%%%%%%%%%%%%%%%%%%%%%%%%%%%%%%%%%%%%%%%%%%%%%%%%%%%%%%%%%%%%%%%%%%%%%%%%%%%%%%%%%%%%%

\title[Pr\"ufer conditions in an amalgamated duplication]{Pr\"ufer conditions in an amalgamated duplication\\ of a ring along an ideal $^{(\star)}$}
\thanks{$^{(\star)}$ Supported by King Fahd University of Petroleum \& Minerals under Research Project \#: RG1208-1/2.}

\author{M. Chhiti}
\address{Department of Mathematics, FST, University S. M. Ben Abdellah, Fez 30000, Morocco}
\email{chhiti.med@hotmail.com}

\author{M. Jarrar}
\address{Department of Mathematics and Statistics, King Fahd University of Petroleum \& Minerals, Dhahran 31261, KSA}
\email{mojarrar@kfupm.edu.sa}

\author[S. Kabbaj]{S. Kabbaj $^{(1)}$}\thanks{$^{(1)}$ Corresponding author.}
\address{Department of Mathematics and Statistics, King Fahd University of Petroleum \& Minerals, Dhahran 31261, KSA}
\email{kabbaj@kfupm.edu.sa}

\author{N. Mahdou}
\address{Department of Mathematics, FST, University S. M. Ben Abdellah, Fez 30000, Morocco}
\email{mahdou@hotmail.com}

\date{\today}

\subjclass[2000]{13F05, 13C10, 13C11, 13F30, 13D05, 16D40, 16E10, 16E60}

\keywords{Amalgamated duplication of a ring along an ideal, Pr\"ufer domain, semihereditary ring, arithmetical ring, fqp-ring, Gaussian ring, Pr\"ufer ring, weak global dimension}

\dedicatory{to Marco Fontana for his 65th Birthday}

\begin{abstract}
This paper investigates ideal-theoretic as well as homological extensions of the Pr\"ufer domain concept to commutative rings with zero divisors in an amalgamated duplication of a ring along an ideal. The new results both compare and contrast with recent results on trivial ring extensions (and pullbacks) as well as yield original families of examples issued from amalgamated duplications subject to various Pr\"ufer conditions.
\smallskip
\end{abstract}
\maketitle

%%%%%%%%%%%%%%%%%%%%%%%%%%%%%%%%%%%%%%%%%%%%%%%%%%%%%%%%%%%%%%%%%%%%%%%%%%%%%%%%%%%%%%%%%%%%%%%%%%%%%%%%%%%%%%%%%%%%%%%%%%%%%%%%%%%%%%%%%%%%%%
%%%%%%%%%%%%%%%%%%%%%%%%%%%%%%%%%%%%%%%%%%%%%%%%%%%%%%%%%%%%%%%%%%%%%%%%%%%%%%%%%%%%%%%%%%%%%%%%%%%%%%%%%%%%%%%%%%%%%%%%%%%%%%%%%%%%%%%%%%%%%%
%%%%%%%%%%%%%%%%%%%%%%%%%%%%%%%%%%%%%%%%%%%%%%%%%%%%%%%%%%%%%%%%%%%%%%%%%%%%%%%%%%%%%%%%%%%%%%%%%%%%%%%%%%%%%%%%%%%%%%%%%%%%%%%%%%%%%%%%%%%%%%
%%%%%%%%%%%%%%%%%%%%%%%%%%%%%%%%%%%%%%%%%%%%%%%%%%%%%%%%%%%%%%%%%%%%%%%%%%%%%%%%%%%%%%%%%%%%%%%%%%%%%%%%%%%%%%%%%%%%%%%%%%%%%%%%%%%%%%%%%%%%%%
%%%%%%%%%%%%%%%%%%%%%%%%%%%%%%%%%%%%%%%%%%%%%%%%%%%%%%%%%%%%%%%%%%%%%%%%%%%%%%%%%%%%%%%%%%%%%%%%%%%%%%%%%%%%%%%%%%%%%%%%%%%%%%%%%%%%%%%%%%%%%%
\section{Introduction}
\noindent All rings considered in this paper are commutative with unity and all modules are unital. Let $A$ be a ring, $I$ an ideal of $A$, and $\pi:A\rightarrow \frac{A}{I}$ the canonical surjection. The amalgamated duplication of $A$ along $I$, denoted by $A\bowtie I$, is the special pullback (or fiber product) of $\pi$ and $\pi$; i.e., the subring  of $A \times A$ given by $$A\bowtie I:=\pi\times_{\frac{A}{I}}\pi=\{(a,a+i)\mid a\in A, i\in I\}.$$
This construction was introduced and its basic properties were studied by D'Anna and Fontana in \cite{DF1,DF2} and then it was investigated by D'Anna in \cite{D} with the aim of applying it to curve singularities (over algebraic closed fields) where he proved that the amalgamated duplication of an algebroid curve along a regular canonical ideal yields a Gorenstein algebroid curve
\cite[Theorem 14 and Corollary 17]{D}. In \cite{DFF1,DFF2}, with Finocchiaro, they have considered the more general context of amalgamated algebra $A\bowtie^{f} J:=\{(a,f(a)+j)\mid a\in A, j\in J\}$ for a given homomorphism of rings $f: A\rightarrow B$ and ideal $J$ of $B$. In particular, they have studied amalgamations in the frame of pullbacks which allowed them to establish numerous (prime) ideal and ring-theoretic basic properties for this new construction. Two more recent works on amalgamated duplications are \cite{MY,Sh}. The interest of amalgamation resides, partly, in its ability to cover several basic constructions in commutative algebra, including pullbacks and trivial ring extensions (also called Nagata's idealizations).

A domain is Pr\"ufer if all its non-zero finitely generated ideals are invertible \cite{K,P}. There are well-known extensions of this notion to arbitrary rings (with zero divisors). Namely, for a ring $R$,\\
\1  $R$ is semihereditary, i.e., every finitely generated ideal of $R$ is projective \cite{CE};\\
\2  $R$ has weak global dimension $\w.dim(R)\leq1$ \cite{G1,G2};\\
\3  $R$ is arithmetical, i.e., every finitely generated ideal of $R$ is locally principal \cite{Fu,J};\\
\4  $R$ is an fqp-ring, i.e., every finitely generated ideal of $R$ is quasi-projective \cite{AJK}.\\
\5 $R$ is Gaussian, i.e., $c(fg)=c(f)c(g), \forall f,g\in R[x]$, where $c(f)$ is the content of $f$ \cite{T};\\
\6 $R$ is Pr\"ufer, i.e., every finitely generated regular ideal of $R$ is projective \cite{BS,Gr}.\\
The following diagram  summarizes the relations between these Pr\"ufer-like conditions where the implications cannot
be reversed in general \cite{AJK,BG,BG2,G2,G3}:
%%%%%%%%%%%%%%%%%%%%%%%%%%%%%%%%%%%%%%%%%%%%%%%%
\begin{center}
$R$ is semihereditary\\
$\Downarrow$\\
$\w.dim(R)\leq 1$\\
$\Downarrow$\\
$R$ is arithmetical\\
$\Downarrow$ \\
$R$ is an fqp-ring\\
$\Downarrow$\\
$R$ is Gaussian\\
$\Downarrow$\\
$R$ is Pr\"ufer
\end{center}
%%%%%%%%%%%%%%%%%%%%%%%%%%%%%%%%%%%%%%%%%%%%%%%%
All these forms coincide in the context of domains \cite{AJK,G3}. Glaz \cite{G3} and Bazzoni \& Glaz \cite{BG2} constructed examples which show that all these notions are distinct in the context of arbitrary rings. It is notable that original examples, marking the distinction of each of the above classes of Pr\"ufer-like rings are rare in the literature. New examples, in this regard, were provided via the study of these notions in diverse settings of trivial ring extensions \cite{AJK,BKM}. Pullbacks issued from rings with zero divisors were also considered for a similar study; namely, let $T$ be an arbitrary ring (possibly, with zero divisors), $I$ a (regular) ideal of $T$, $\pi:T\rightarrow \frac{T}{I}$ the canonical surjection, and $i:D\hookrightarrow \frac{T}{I}$ an inclusion of rings. Let $R:=i\times_{\frac{T}{I}}\pi$ be the pullback of $i$ and $\pi$. In \cite{Bo1,Bo2}, the author examined the transfer of the Pr\"ufer conditions (except the fqp property) from $D$ and $T$ to $R$. At this point, it is worthwhile noticing that an amalgamated duplication along an ideal $I$ collapses to a trivial ring extension $A\ltimes I$ for $I^{2}=0$ and overlaps with the above pullbacks for $I=0$ (i.e., $A\bowtie I\cong A$).

This paper investigates necessary and sufficient conditions for an amalgamated duplication of a ring along an ideal to inherit the six aforementioned Pr\"ufer notions, and hence provides new families of examples subject to these conditions. In this vein, we shall omit the case $A\bowtie A=A\times A$ since all these notions are stable under finite products by \cite[Theorem 3.4]{B} and Remark~\ref{fqp1}. That is, in all main results, the ideal $I$ of the amalgamation will be assumed to be proper. Section 2 examines the transfer of the notions of local Pr\"ufer ring and total ring of quotients. Section 3  deals with the arithmetical, Gaussian, and fqp conditions. Section 4 is devoted to the weak global dimension and the transfer of the semihereditary condition.

Throughout,  $A\bowtie I$ will denote the amalgamated duplication of a ring $A$ along an ideal $I$ of $A$. If $J$ is an ideal of $A$, then
$J\bowtie I:=\{(j,j+i)\mid j\in J, i\in I\}$ is an ideal of $A\bowtie I$ with $\frac{A\bowtie I}{J\bowtie I}\cong \frac{A}{J}$ \cite[Proposition 5.1]{DFF1}. Under the natural injection $A\hookrightarrow A\bowtie I$ defined by $i(a)=(a,a)$, we identify $A$ with its respective image in $A\bowtie I$; and the natural surjection $A\bowtie I\twoheadrightarrow A$ yields the isomorphism
$\frac{A\bowtie I}{(0)\bowtie I}\cong A$ \cite[Remark 1]{D}. Also, for a ring $R$, $Q(R)$ will denote the total ring of quotients and $\Ze(R)$, $\U(R)$, $\Nil(R)$, and $\J(R)$ will denote, respectively, the set of zero divisors, set of invertible elements, nilradical, and Jacobson radical of $R$. Finally, $\Max(R)$ shall denote the set of maximal ideals of $R$, $\Max(R, I):=\{\m\in\Max(R)\mid I\subseteq \m\}$, and $\Ann(I)$ the annihilator of $I$ for any ideal $I$ of $R$.

%%%%%%%%%%%%%%%%%%%%%%%%%%%%%%%%%%%%%%%%%%%%%%%%%%%%%%%%%%%%%%%%%%%%%%%%%%%%%%%%%%%%%%%%%%%%%%%%%%%%%%%%%%%%%%%%%%%%%%%%%%%%%%%%%%%%%%%%%%%%%%
%%%%%%%%%%%%%%%%%%%%%%%%%%%%%%%%%%%%%%%%%%%%%%%%%%%%%%%%%%%%%%%%%%%%%%%%%%%%%%%%%%%%%%%%%%%%%%%%%%%%%%%%%%%%%%%%%%%%%%%%%%%%%%%%%%%%%%%%%%%%%%
%%%%%%%%%%%%%%%%%%%%%%%%%%%%%%%%%%%%%%%%%%%%%%%%%%%%%%%%%%%%%%%%%%%%%%%%%%%%%%%%%%%%%%%%%%%%%%%%%%%%%%%%%%%%%%%%%%%%%%%%%%%%%%%%%%%%%%%%%%%%%%
%%%%%%%%%%%%%%%%%%%%%%%%%%%%%%%%%%%%%%%%%%%%%%%%%%%%%%%%%%%%%%%%%%%%%%%%%%%%%%%%%%%%%%%%%%%%%%%%%%%%%%%%%%%%%%%%%%%%%%%%%%%%%%%%%%%%%%%%%%%%%%
%%%%%%%%%%%%%%%%%%%%%%%%%%%%%%%%%%%%%%%%%%%%%%%%%%%%%%%%%%%%%%%%%%%%%%%%%%%%%%%%%%%%%%%%%%%%%%%%%%%%%%%%%%%%%%%%%%%%%%%%%%%%%%%%%%%%%%%%%%%%%%
\section{Transfer of the Pr\"ufer condition}\label{P}

This section handles the notion of Pr\"ufer ring. An ideal $I$ of a ring $R$ is invertible if $II^{-1}=R$, where $I^{-1}:=\{x\in Q(R)\mid xI\subseteq R\}$; and $R$ is Pr\"ufer if every finitely generated regular ideal of $R$ is invertible (or, equivalently, projective) \cite{BS,Gr}. We refer the reader to \cite[Theorem 2.13]{BG} which collects fifteen conditions equivalent to this definition. Finally, recall that the class of Pr\"ufer rings contains strictly the class of total rings of quotients.

Next, before we announce the main result of this section (Theorem~\ref{P1}), we make the following useful remark.

%%%%%%%%%%%%%%%%%%%%%%%%%%%%%%%%%%%%%%%%%%%%%%%%%%%%%%%%%%%%%%%%%%%%%%
%%%%%%%%%%%%%%%%%%%%%%%%%%%%%%%%%%%%%%%%%%%%%%%%%%%%%%%%%%%%%%%%%%%%%%
\begin{remark}\label{P5}
Let $A$ be a ring, $I$ an ideal of $A$, and $P$ a prime ideal of $A$. In \cite[Propositions 5 \& 7]{D}, D'Anna proved that if  $I\nsubseteqq P$, then $\tilde{P}:=\{(p+i,p)\mid p\in P,\ i\in I\}$ and $P\bowtie I$ are the only prime ideals of  $A\bowtie I$ lying over $P$ and we have $$\dfrac{A\bowtie I}{\tilde{P}}\cong\dfrac{A\bowtie I}{P\bowtie I}\cong \dfrac{A}{P}\ \mbox{ and }\
    (A\bowtie I)_{\tilde{P}}\cong(A\bowtie I)_{P\bowtie I}\cong A_{P}.$$
Notice that $P\bowtie I$ and $\tilde{P}$ are incomparable. However, if $I\subseteq P$, then $P\bowtie I=\tilde{P}$ is the unique prime ideal of $A\bowtie I$ lying over $P$ and we have
$$\dfrac{A\bowtie I}{P\bowtie I}\cong \dfrac{A}{P}\ \mbox{ and }\
    (A\bowtie I)_{P\bowtie I}\cong A_{P}\bowtie I_{P}.$$
As a consequence, $(A,\m)$ is local with $I\subseteq \m$ if and only if $A\bowtie I$ is local with maximal ideal $\m\bowtie I$. This basic fact will be used throughout this paper without explicit mention.
\end{remark}

Now, to the main result:

%%%%%%%%%%%%%%%%%%%%%%%%%%%%%%%%%%%%%%%%%%%%%%%%%%%%%%%%%%%%%%%%%%%%%%
%%%%%%%%%%%%%%%%%%%%%%%%%%%%%%%%%%%%%%%%%%%%%%%%%%%%%%%%%%%%%%%%%%%%%%
\begin{thm}\label{P1}
Let $(A, \m)$ be a local ring and $I$ a proper ideal of $A$. Then $A\bowtie I$ is a Pr\"ufer ring if and only if $A$ is a Pr\"ufer ring and $I=aI$ for every $a\in \m\setminus \Ze(A)$.
\end{thm}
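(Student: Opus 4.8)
The plan is to work with the explicit description of $A\bowtie I$ as a subring of $A\times A$ and exploit the fact that, since $(A,\m)$ is local with $I\subseteq\m$, the ring $A\bowtie I$ is local with maximal ideal $\m\bowtie I$ (Remark~\ref{P5}). Because a local ring is Pr\"ufer if and only if every two-generated regular ideal is principal (this is one of the fifteen equivalent conditions in \cite[Theorem 2.13]{BG}; in a local ring invertible regular ideals are principal), both directions will be reduced to statements about regular elements and two-generated regular ideals. The first routine step is to identify the regular elements and the total ring of quotients of $A\bowtie I$: an element $(a,a+i)$ is regular in $A\bowtie I$ precisely when $a\notin\Ze(A)$ and $a+i\notin\Ze(A)$, and one checks that $Q(A\bowtie I)\cong Q(A)\bowtie I'$ for the appropriate localized ideal, so that regular ideals of $A\bowtie I$ are controlled by regular ideals of $A$ together with how $I$ behaves. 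I would record these facts as preliminary observations before splitting into the two implications.

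For the ``only if'' direction, assume $A\bowtie I$ is Pr\"ufer. First I would show $A$ is Pr\"ufer: given a finitely generated regular ideal $J$ of $A$, the ideal $J\bowtie I$ (or rather the ideal of $A\bowtie I$ generated by the diagonal images of generators of $J$) is finitely generated and regular in $A\bowtie I$, hence principal; pushing this back down along the surjection $A\bowtie I\twoheadrightarrow A$ with kernel $(0)\bowtie I$ — using $\frac{A\bowtie I}{(0)\bowtie I}\cong A$ — yields that $J$ is principal in $A$, so $A$ is Pr\"ufer. For the condition $I=aI$ with $a\in\m\setminus\Ze(A)$: fix such an $a$ and pick any $i\in I$. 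The element $(a,a)$ is regular in $A\bowtie I$, and so is $(a,a+i)$ since $a\notin\Ze(A)$. Consider the two-generated ideal $\big((a,a),(a,a+i)\big)$ of $A\bowtie I$; it is regular (it contains the regular element $(a,a)$) and two-generated, hence principal in the local ring $A\bowtie I$. Analyzing which principal ideal it can be — the generator must itself be regular and must divide $(a,a)$, and comparing first and second coordinates — forces $(0,i)$ to lie in $(a,a)(A\bowtie I)$, which unwinds exactly to $i\in aI$. Since $i\in I$ was arbitrary, $I\subseteq aI\subseteq I$.

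For the ``if'' direction, assume $A$ is Pr\"ufer and $I=aI$ for every $a\in\m\setminus\Ze(A)$. Let $\mathfrak{J}$ be a finitely generated regular ideal of $A\bowtie I$; I must show it is principal. Write its generators as $(a_k,a_k+i_k)$. Regularity of $\mathfrak{J}$ means some element, hence (after the standard determinant/Dedekind-trick argument, or by passing to contents) some $A$-linear combination of the $a_k$, is a non-zero-divisor of $A$; denote by $J=(a_1,\dots,a_n)A$ the ``first-coordinate'' ideal, which is then a finitely generated regular ideal of $A$, hence principal, say $J=aA$ with $a\notin\Ze(A)$, and $a\in\m$. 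Using $I=aI$ one arranges that each $i_k$ can be written as $a i_k'$ with $i_k'\in I$; this lets one show the second-coordinate data is also ``generated'' compatibly, and a coordinate-wise computation produces a single regular element $(a,a+j)\in\mathfrak{J}$ (for a suitable $j\in I$) whose multiples recapture every generator. Hence $\mathfrak{J}=(a,a+j)(A\bowtie I)$ is principal, so $A\bowtie I$ is Pr\"ufer. I expect the main obstacle to be this last step: correctly tracking the second coordinates and verifying that the hypothesis $I=aI$ (applied to the specific regular generator $a$ of $J$) is exactly what is needed to absorb the ``twist'' data $\{i_k\}$ into a single principal generator — in particular, being careful that $a$ genuinely lies in $\m\setminus\Ze(A)$ so the hypothesis applies, and handling the degenerate sub-cases where generators are units or where $I\subseteq\Ann$ of some element.
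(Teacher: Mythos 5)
Your overall skeleton is the paper's: since $A\bowtie I$ is local with maximal ideal $\m\bowtie I$, both directions reduce to deciding when finitely generated regular ideals of $A\bowtie I$ are principal; the condition $I=aI$ is extracted from the principality of the two-generated regular ideal generated by $(a,a)$ and $(a,a+i)$; and the converse rebuilds a single regular generator. One step is genuinely different and works: you deduce that $A$ is Pr\"ufer by lifting a finitely generated regular ideal $J$ of $A$ to the diagonal ideal of $A\bowtie I$ and pushing a principal generator back down the retraction $A\bowtie I\twoheadrightarrow A$, whereas the paper routes this through Gaussian polynomials (Lemmas~\ref{P2} and~\ref{P3}); your version is shorter and correct, since $(a,a)$ is visibly regular when $a$ is.

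The gaps are concentrated exactly where you predicted. First, your ``routine'' characterization of regular elements --- $(a,a+i)$ regular iff $a\notin\Ze(A)$ and $a+i\notin\Ze(A)$ --- is false in general: by Maimani--Yassemi (Lemma~\ref{P3.1}) the third class of zero divisors consists of the $(a,a+i)$ with $a$ regular and $a+i$ annihilated by a \emph{nonzero element of $I$}, so $a+i\in\Ze(A)$ need not make $(a,a+i)$ a zero divisor. What the argument actually needs is Lemma~\ref{P4}: when $A$ is local Pr\"ufer and $I\subseteq\Ze(A)$ one has $\Ze(A\bowtie I)=\Ze(A)\bowtie I$, and its proof is not routine --- it rests on the comparability Lemma~\ref{P3.2} (in a local Pr\"ufer ring a regular principal ideal is comparable with every principal ideal), which gives $i=ka$ with $k$ a non-unit and hence $a+i=(1+k)a$, a unit multiple of $a$. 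Second, this same fact is the missing engine of your converse: after normalizing $J=a_1A$ with $a_1$ regular (in a local ring one of the $a_k$ must already generate $J$), writing $(a_k,a_k+i_k)=(a_1,a_1+i_1)(c_k,c_k+m_k)$ forces you to solve $(a_1+i_1)m_k=i_k-i_1c_k$ with $m_k\in I$, and for that you need $(a_1+i_1)I=I$, not merely $a_1I=I$; this is exactly where $a_1+i_1=(\mbox{unit})\cdot a_1$ enters, and you never supply it --- your plan to write $i_k=ai_k'$ and manufacture $(a,a+j)$ does not by itself close this. Finally, a small point in the forward direction: principality of $\big((a,a),(a,a+i)\big)$ in a local ring gives $(0,i)\in\big((a,a)\big)$ \emph{or} $(0,i)\in\big((a,a+i)\big)$; the second case still yields $i=aj(1-j)^{-1}\in aI$, but by a computation different from the one you describe. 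None of these is fatal --- the details can be filled in along your lines --- but the comparability lemma is an idea your write-up is missing, not a routine verification.
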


%%%%%%%%%%%%%%%%%%%%%%%%%%%%%%%%%%%%%%%%%%%%%%%%%%%%%%%%%%%%%%%%%%%%%%
%%%%%%%%%%%%%%%%%%%%%%%%%%%%%%%%%%%%%%%%%%%%%%%%%%%%%%%%%%%%%%%%%%%%%%
\begin{remark}\label{P5.1}
\begin{xenumerate}
\item Let $(A,\m)$ be a local Pr\"ufer ring. One can easily check that:
\begin{center}
$aI=a^{2}I,\ \forall\ a\in \m$ (i.e., $\forall\ a\in A$)\\
$\Downarrow$\\
$I=aI,\ \forall\ a\in \m\setminus \Ze(A)$ (i.e., $\forall\ a\in A\setminus \Ze(A)$)\\
$\Downarrow$\\
$I\subseteq \Ze(A)\subseteq \m$.
\end{center}
So, by Theorem~\ref{P1}, if $I$ is a proper regular ideal of $A$ (i.e., $I\nsubseteq\Ze(A)$), then the amalgamation  $A\bowtie I$ is never a Pr\"ufer ring. The first assumption ``$aI=a^{2}I,\ \forall\ a\in \m$" will be used later in Theorem~\ref{AGfqp1} to characterize amalgamations subject to the Gaussian and fqp conditions.

\item Notice that, in the setting $0\not=I\subseteq \Ze(A)$, the assumption ``$ I=aI,\ \forall\ a\in \m\setminus \Ze(A)$" is not necessarily embedded in the (local) Pr\"ufer condition. For instance, let $A:=\Z_{(2)}\ltimes \Q$ and $I:=0\ltimes \Z_{(2)}$. Then $A$ is a chained ring \cite[Theorem 2.1(2)]{BKM} with maximal ideal $2\Z_{(2)}\ltimes \Q$ and $I^{2}=0$; whereas $I\not= (4,0)I$. So, by Theorem~\ref{P1}, $A\bowtie I = A\ltimes I$ is not a Pr\"ufer ring.
\end{xenumerate}
\end{remark}

The proof of the theorem relies on the following lemmas which are of independent interest. Recall at this point that a polynomial $f$ over a ring $R$ is Gaussian if the content ideal equation $c(fg) = c(f)c(g)$ holds for any polynomial $g$ over $R$ \cite{T}.

%%%%%%%%%%%%%%%%%%%%%%%%%%%%%%%%%%%%%%%%%%%%%%%%%%%%%%%%%%%%%%%%%%%%%%
%%%%%%%%%%%%%%%%%%%%%%%%%%%%%%%%%%%%%%%%%%%%%%%%%%%%%%%%%%%%%%%%%%%%%%
\begin{lemma}\label{P2} Let $A$ be a ring and $I$ an ideal of
$A$. If the polynomial $F(x):=\sum_{i=0}^{n}(a_{i},a_{i})x^{i}$ is
Gaussian over $A\bowtie I$, then $f(x):=\sum_{i=0}^{n}a_{i}x^{i}$
is Gaussian over $A$.
\end{lemma}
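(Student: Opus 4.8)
The plan is to exploit the relationship between the content of a polynomial over $A\bowtie I$ and the content of its "diagonal image" over $A$, via the natural surjection $\rho: A\bowtie I \twoheadrightarrow A$, $(a,a+i)\mapsto a$. Observe first that the diagonal embedding $a\mapsto (a,a)$ sends the polynomial $f(x)=\sum a_i x^i$ over $A$ to $F(x)=\sum (a_i,a_i)x^i$ over $A\bowtie I$, and that $\rho$ is a left inverse of this embedding. The key point is that $\rho$ is a ring epimorphism, so for any polynomial $h$ over $A\bowtie I$ we have $\rho(c(h)) = c(\rho(h))$, where $\rho$ acts coefficientwise (applying a surjective ring homomorphism to a polynomial commutes with forming the content ideal).

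The main step is then the following. Suppose $F$ is Gaussian over $A\bowtie I$; I want to show $f$ is Gaussian over $A$, i.e. $c(fg)=c(f)c(g)$ for an arbitrary polynomial $g(x)=\sum b_j x^j$ over $A$. Lift $g$ to the polynomial $G(x):=\sum (b_j,b_j)x^j$ over $A\bowtie I$ — again the diagonal lift. Since $F$ is Gaussian, $c(FG)=c(F)c(G)$ in $A\bowtie I$. Now apply $\rho$ to both sides: using $\rho(c(FG))=c(\rho(FG))=c(\rho(F)\rho(G))=c(fg)$ and $\rho(c(F)c(G))=\rho(c(F))\rho(c(G))=c(\rho(F))c(\rho(G))=c(f)c(g)$ (the middle equality because $\rho$ is a surjective ring homomorphism and hence preserves products of ideals), we conclude $c(fg)=c(f)c(g)$ in $A$. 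Since $g$ was arbitrary, $f$ is Gaussian over $A$.

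The only subtlety — and what I'd expect to be the one place needing a careful sentence rather than a throwaway — is the verification that $\rho(c(h)) = c(\rho(h))$ for a surjective ring homomorphism $\rho$. This is immediate: if $h=\sum c_i x^i$ then $c(h)=\sum Rc_i$ and $\rho(\sum Rc_i) = \sum \rho(R)\rho(c_i) = \sum A\rho(c_i) = c(\rho(h))$, using surjectivity to identify $\rho(R)$ with $A$ and the standard fact that a surjective ring map sends the ideal generated by a set to the ideal generated by the images. Likewise $\rho(c(F)c(G)) = \rho(c(F))\rho(c(G))$ holds because any ring homomorphism maps $IJ$ into $\rho(I)\rho(J)$, with equality when $\rho$ is surjective. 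With these observations in place the argument is a two-line diagram chase; there is no genuine obstacle, only the bookkeeping of which lift ($G$ diagonal) to choose so that $\rho(G)=g$ exactly.
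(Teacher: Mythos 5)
Your argument is correct: pushing the content equation for $FG$ down through the retraction $\rho:A\bowtie I\twoheadrightarrow A$, $(a,a+i)\mapsto a$, using that a surjective ring homomorphism commutes with forming content ideals and with products of ideals, and that the diagonal lift $G$ of an arbitrary $g$ satisfies $\rho(G)=g$, is exactly the natural way to prove this. The paper dismisses the lemma as ``Straightforward,'' and your write-up is the standard argument behind that word, with the right details (surjectivity of $\rho$ and the choice of lift) made explicit.
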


%%%%%%%%%%%%%%%%%
%%%%%%%%%%%%%%%%%
\begin{proof}
Straightforward.
\end{proof}

%%%%%%%%%%%%%%%%%%%%%%%%%%%%%%%%%%%%%%%%%%%%%%%%%%%%%%%%%%%%%%%%%%%%%%
%%%%%%%%%%%%%%%%%%%%%%%%%%%%%%%%%%%%%%%%%%%%%%%%%%%%%%%%%%%%%%%%%%%%%%
\begin{lemma}\label{P3}
Let $A$ be a ring and $I$ an ideal of $A$. If $A\bowtie I$ is Pr\"ufer, then $A$ is Pr\"ufer.
\end{lemma}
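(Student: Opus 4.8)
The plan is to show that $A$ is Pr\"ufer by taking an arbitrary finitely generated regular ideal $J=(a_1,\dots,a_n)$ of $A$ and producing from it a finitely generated regular ideal of $A\bowtie I$, whose invertibility (granted by hypothesis) can then be pushed back down to yield invertibility of $J$. The natural candidate is the ideal $\mathcal{J}:=\big((a_1,a_1),\dots,(a_n,a_n)\big)$ of $A\bowtie I$, i.e. the extension of $J$ under the diagonal embedding $A\hookrightarrow A\bowtie I$, $a\mapsto(a,a)$. First I would verify that $\mathcal{J}$ is regular in $A\bowtie I$: if $(x,y)\in A\bowtie I$ kills every $(a_i,a_i)$, then $xa_i=0$ and $ya_i=0$ in $A$ for all $i$, so $x$ and $y$ both lie in $\Ann_A(J)=0$ since $J$ is regular; hence $(x,y)=(0,0)$. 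Thus $\mathcal{J}$ is a finitely generated regular ideal of the Pr\"ufer ring $A\bowtie I$, so it is invertible.

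The second, more substantive step is to descend invertibility. Rather than arguing with inverses in the total quotient ring directly (which is delicate because $Q(A\bowtie I)$ and $Q(A)$ need not sit together nicely), I would instead use the characterization of Pr\"ufer rings in terms of Gaussian polynomials together with Lemma~\ref{P2}, which is exactly tailored for this. Concretely: a ring $R$ is Pr\"ufer if and only if every polynomial over $R$ with regular (equivalently, unit — up to localization) content is Gaussian; more usefully, one of the fifteen equivalent conditions of \cite[Theorem 2.13]{BG} states that $R$ is Pr\"ufer iff every finitely generated regular ideal is invertible, and an equivalent local criterion is that for every regular $f\in R[x]$ the polynomial $f$ is Gaussian. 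Given a regular polynomial $f(x)=\sum a_ix^i$ over $A$, its content $c(f)=(a_0,\dots,a_n)$ is a finitely generated regular ideal; the diagonal polynomial $F(x)=\sum(a_i,a_i)x^i$ over $A\bowtie I$ then has content $c(F)=\mathcal{J}$ of the above form, which is regular in $A\bowtie I$ by the first step, hence $F$ is Gaussian over $A\bowtie I$ because $A\bowtie I$ is Pr\"ufer. By Lemma~\ref{P2}, $f$ is Gaussian over $A$. Since every regular polynomial over $A$ is thereby Gaussian, $A$ is Pr\"ufer.

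The main obstacle I anticipate is making precise the equivalence ``$R$ Pr\"ufer $\iff$ every regular polynomial over $R$ is Gaussian,'' which is the bridge connecting the invertibility definition to Lemma~\ref{P2}. One direction (Pr\"ufer $\Rightarrow$ regular polynomials are Gaussian) follows because an invertible — hence locally principal — content ideal forces the content formula $c(fg)=c(f)c(g)$ (this is the classical Dedekind–Mertens / Gaussian-ideal argument, valid locally where $c(f)$ becomes principal and generated by a regular element); the other direction recovers invertibility of a regular f.g. ideal $(a_0,\dots,a_n)$ by realizing it as the content of $f=\sum a_ix^i$ and invoking that a regular Gaussian polynomial has invertible content \cite[Theorem 2.13]{BG}. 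If one prefers to avoid even citing that equivalence, an alternative is a direct localization argument: by Remark~\ref{P5}, for each prime $P$ of $A$ the local ring $(A\bowtie I)_{Q}$ (for a suitable prime $Q$ over $P$) is either $A_P$ or $A_P\bowtie I_P$, and in either case invertibility of $\mathcal{J}$ locally forces $J_P$ to be principal generated by a regular element; since this holds at every prime, $J$ is invertible. Either route is routine once the setup is in place; I would present the polynomial/Gaussian route since Lemma~\ref{P2} has evidently been stated precisely for this purpose.
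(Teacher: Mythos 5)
Your route is essentially the paper's: extend $J$ diagonally to $\mathcal{J}=\big((a_1,a_1),\dots,(a_n,a_n)\big)$, check regularity, use invertibility in $A\bowtie I$ to make $F(x)=\sum(a_i,a_i)x^i$ Gaussian, descend via Lemma~\ref{P2}, and recover invertibility of $J=c(f)$ from the Bazzoni--Glaz characterization (the paper cites \cite[Theorem 4.2]{BG} for precisely the step ``$f$ regular and Gaussian over $A$ implies $c(f)$ invertible''). So the architecture is sound and matches the intended proof.

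There is, however, one genuine misstep in your first step: you justify that $\mathcal{J}$ is regular by showing that its annihilator is zero. In this paper (and in the Griffin/Butts--Smith tradition it follows), a \emph{regular} ideal is one that \emph{contains a non-zero-divisor}, which is strictly stronger than having zero annihilator; a finitely generated ideal can be faithful without containing any regular element, and the Pr\"ufer hypothesis on $A\bowtie I$ only applies to ideals that are regular in the former sense. As written, your argument therefore does not license the conclusion ``$\mathcal{J}$ is invertible,'' nor the claim that $F$ is a regular polynomial. The repair is one line, and it is what the paper does: since $J$ is regular it contains a regular element $a$, and then $(a,a)\in\mathcal{J}$ is regular in $A\bowtie I$ because $(x,y)(a,a)=(0,0)$ forces $x=y=0$. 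With that substitution the rest of your argument goes through exactly as in the paper.
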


%%%%%%%%%%%%%%%%%
%%%%%%%%%%%%%%%%%
\begin{proof}
Assume that $A\bowtie I$ is a Pr\"ufer ring. Let $J:=\sum_{i=0}^{n}a_{i}A$ be a finitely generated regular ideal of $A$ and $a$ a regular element of $J$.  Clearly, $G:=\sum_{i=0}^{n}(a_{i},a_{i})A\bowtie I$ is a finitely generated regular ideal of $A\bowtie I$ since $(a,a)\in G$. Then $G$ is invertible and hence the polynomial $F(x):=\sum_{i=0}^{n}(a_{i},a_{i})x^{i}$ is Gaussian over $A\bowtie
I $. By Lemma~\ref{P2}, $f(x):=\sum_{i=0}^{n}a_{i}x^{i}$ is Gaussian over $A$. Therefore
$J=c(f)$ is invertible in $A$ by \cite[Theorem 4.2]{BG}, making $A$ a Pr\"ufer ring, as desired.
\end{proof}

Next we recall a nice result by Maimani and Yassemi which provides a full description for the set of zero divisors of $A\bowtie I$ for any arbitrary commutative ring. In the sequel, the subset $\{(a,a+i)\mid a\in\Ze(A), i\in I\}$ of $A\bowtie I$ will be denoted by $\Ze(A)\bowtie I$.

%%%%%%%%%%%%%%%%%%%%%%%%%%%%%%%%%%%%%%%%%%%%%%%%%%%%%%%%%%%%%%%%%%%%%%
%%%%%%%%%%%%%%%%%%%%%%%%%%%%%%%%%%%%%%%%%%%%%%%%%%%%%%%%%%%%%%%%%%%%%%
\begin{lemma}[{\cite[Proposition 2.2]{MY}}]\label{P3.1}
Let $A$ be a ring and $I$ an ideal of $A$. Then
\[\begin{array}{rl}
\Ze(A\bowtie I) &=\Ze(A)\bowtie I\ \cup\ \big\{(i,0)\mid i\in I\big\}\\
                &\cup\ \big\{(a, a+i)\mid a\ \mbox{regular and}\ j(a+i)=0\ \mbox{for some}\ 0\not=j\in I\big\}.
                \end{array}\]
\end{lemma}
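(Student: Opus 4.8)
The plan is to prove the asserted equality by establishing the two inclusions, after naming the three sets on the right as $T_{1}:=\Ze(A)\bowtie I$, $T_{2}:=\{(i,0)\mid i\in I\}$, and $T_{3}:=\{(a,a+i)\mid a\text{ regular and }j(a+i)=0\text{ for some }0\neq j\in I\}$. The whole argument rests on the defining identity that $(a,a+i)$ is a zero divisor of $A\bowtie I$ precisely when there is a pair $c\in A$, $k\in I$ with $(c,k)\neq(0,0)$ such that $ac=0$ and $(a+i)(c+k)=0$; here I use that $(c,c+k)=(0,0)$ forces $c=0$ and $k=0$, so a nonzero annihilator means $c\neq0$ or $k\neq0$.

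For the inclusion $\Ze(A\bowtie I)\subseteq T_{1}\cup T_{2}\cup T_{3}$, I would take a nonzero annihilator $(c,c+k)$ and split on whether $c$ vanishes. If $c\neq0$, then $ac=0$ exhibits $a$ as a zero divisor of $A$, so $(a,a+i)\in T_{1}$. If $c=0$, then $k\neq0$ and the second equation becomes $(a+i)k=0$ with $0\neq k\in I$; now either $a\in\Ze(A)$, giving $(a,a+i)\in T_{1}$, or $a$ is regular, giving $(a,a+i)\in T_{3}$ (with $j=k$). Thus $\Ze(A\bowtie I)$ already sits inside $T_{1}\cup T_{3}$, a fortiori inside the full union.

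For the reverse inclusion I would produce an explicit nonzero annihilator for each set. If $(a,a+i)\in T_{3}$ with $0\neq j\in I$ and $(a+i)j=0$, then $(0,j)\in A\bowtie I$ is nonzero and $(a,a+i)(0,j)=(0,(a+i)j)=(0,0)$. If $(i,0)\in T_{2}$ with $i\neq0$, then $(0,i)$ is nonzero and $(i,0)(0,i)=(0,0)$, while $(0,0)\in T_{2}$ is trivially a zero divisor. The genuinely delicate case is $T_{1}$: from $a\in\Ze(A)$ one only gets $ab=0$ for some $b\neq0$, and the naive candidates $(b,0)$ and $(b,b)$ either fail to lie in $A\bowtie I$ or fail to annihilate $(a,a+i)$, so a single uniform choice does not obviously work.

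The main obstacle, and the heart of the proof, is precisely this $T_{1}$ case; the decisive point is that the element $ib$ lies in $\Ann(a)\cap I$, since $a(ib)=i(ab)=0$ and $ib\in I$. I would then distinguish two subcases. If $ib\neq0$, then $(ib,0)$ is a legitimate nonzero element of $A\bowtie I$ (its first coordinate lies in $I$), and $(a,a+i)(ib,0)=(i\,ab,0)=(0,0)$. If $ib=0$, then $(a+i)b=ab+ib=0$, so the nonzero diagonal element $(b,b)$ satisfies $(a,a+i)(b,b)=(ab,(a+i)b)=(0,0)$. Either way $(a,a+i)$ is a zero divisor, which completes the reverse inclusion and hence the proof. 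I note in passing that this argument reveals $T_{2}\subseteq T_{1}\cup T_{3}$, so the middle set is redundant, but the stated equality of course still holds.
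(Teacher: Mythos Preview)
The paper does not supply its own proof of this lemma; it is simply quoted from Maimani--Yassemi \cite{MY}. Your direct verification of both inclusions is correct. In particular, the $T_{1}$ case is handled neatly by the observation that $ib\in\Ann(a)\cap I$, followed by the dichotomy $ib\neq 0$ (use $(ib,0)$) versus $ib=0$ (use $(b,b)$); this is exactly the missing idea that makes the naive candidates work. Your closing remark that $T_{2}\subseteq T_{1}\cup T_{3}$, so that the middle set is redundant, is also valid: for $(i,0)$ with $i$ regular one has $a+i'=0$ and any nonzero $j\in I$ witnesses membership in $T_{3}$, while $i\in\Ze(A)$ puts $(i,0)$ in $T_{1}$.
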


%%%%%%%%%%%%%%%%%%%%%%%%%%%%%%%%%%%%%%%%%%%%%%%%%%%%%%%%%%%%%%%%%%%%%%
%%%%%%%%%%%%%%%%%%%%%%%%%%%%%%%%%%%%%%%%%%%%%%%%%%%%%%%%%%%%%%%%%%%%%%
\begin{lemma}\label{P3.2}
Let $R$ be a local Pr\"ufer ring and let $x$ be a regular element of $R$. Then $xR$ is comparable with every principal ideal of $R$.
\end{lemma}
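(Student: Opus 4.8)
The plan is to combine the defining property of Prüfer rings --- every finitely generated regular ideal is invertible --- with the observation that over a local ring an invertible ideal is principal and generated by a regular element.

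Fix an arbitrary $y\in R$; the goal is to show that $xR$ and $yR$ are comparable. I would consider the ideal $J:=xR+yR$. Since $x$ is a regular element of $J$, the ideal $J$ is finitely generated and regular, hence invertible because $R$ is Pr\"ufer (by the definition recalled above, or \cite[Theorem 2.13]{BG}). As $R$ is local, an invertible ideal is free of rank one, so $J=zR$ for some $z\in R$; moreover $z$ is necessarily regular, since a principal ideal $zR$ is invertible exactly when $z$ is a unit of $Q(R)$, i.e.\ a non-zero-divisor of $R$.

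Next, using $x,y\in J=zR$ I would write $x=az$ and $y=bz$ with $a,b\in R$, and using $z\in J=xR+yR$ also write $z=cx+dy$ with $c,d\in R$. Substituting yields $z=(ca+db)z$, so $(1-ca-db)z=0$, and regularity of $z$ forces $ca+db=1$. Since the maximal ideal of the local ring $R$ is proper, $a$ and $b$ cannot both lie in it; hence $a$ is a unit or $b$ is a unit of $R$. In the first case $zR=azR=xR$, so $yR\subseteq J=xR$; in the second case $zR=yR$, so $xR\subseteq J=yR$. Either way $xR$ is comparable with $yR$, as claimed.

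I expect the only mildly delicate point to be the passage from ``invertible'' to ``principal, generated by a regular element'' for ideals of a local ring; this is a standard fact (an invertible ideal is finitely generated projective of rank one, hence free over a local ring, and a principal ideal is invertible iff its generator is regular), and once it is in hand the rest is a one-line substitution followed by the usual ``a sum in the maximal ideal cannot be a unit'' argument.
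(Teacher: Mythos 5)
Your proof is correct. The paper itself gives no argument here: it disposes of the lemma in one line by citing \cite[Lemma 3.8]{AJK}. What you have written is a clean, self-contained derivation of the same fact, and it is essentially the standard argument one would expect to underlie that citation: $J:=xR+yR$ is a finitely generated regular ideal, hence invertible; an invertible ideal of a local ring is principal (write $1=\sum a_ib_i$ with $a_i\in J$, $b_i\in J^{-1}$, note one product $a_ib_i$ must be a unit since $R$ is local, and conclude $J=a_iR$), and the generator is regular because $J$ contains the regular element $x$; then the relation $1=ca+db$ in the local ring forces $a$ or $b$ to be a unit, giving $J=xR$ or $J=yR$. The one point you flag as delicate --- ``invertible implies free of rank one over a local ring'' --- is most safely handled by the unit argument just sketched rather than by an appeal to rank (which is slightly awkward in the presence of zero divisors), but this is a matter of presentation, not a gap. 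Your version has the advantage of being verifiable without consulting \cite{AJK}; the paper's has the advantage of brevity.
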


%%%%%%%%%%%%%%%%%
%%%%%%%%%%%%%%%%%
\begin{proof}
The proof follows immediately from \cite[Lemma 3.8]{AJK}.
\end{proof}

%%%%%%%%%%%%%%%%%%%%%%%%%%%%%%%%%%%%%%%%%%%%%%%%%%%%%%%%%%%%%%%%%%%%%%
%%%%%%%%%%%%%%%%%%%%%%%%%%%%%%%%%%%%%%%%%%%%%%%%%%%%%%%%%%%%%%%%%%%%%%
\begin{lemma}\label{P4}
Let $A$ be a local Pr\"ufer ring and $I$ an ideal of $A$. Then: $$I\subseteq \Ze(A) \Leftrightarrow\Ze(A\bowtie I)=\Ze(A)\bowtie I.$$
\end{lemma}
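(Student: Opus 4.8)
The plan is to read off $\Ze(A\bowtie I)$ from the Maimani--Yassemi formula (Lemma~\ref{P3.1}), which expresses it as the union of $\Ze(A)\bowtie I$, the set $S_{2}:=\{(i,0)\mid i\in I\}$, and the set $S_{3}:=\{(a,a+i)\mid a \text{ regular and } j(a+i)=0 \text{ for some } 0\ne j\in I\}$. Thus the asserted equivalence reduces to showing that $I\subseteq\Ze(A)$ holds if and only if $S_{2}\cup S_{3}\subseteq \Ze(A)\bowtie I$.

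For the implication ``$\Leftarrow$'' I would only need $S_{2}$: since $(i,0)\in\Ze(A\bowtie I)=\Ze(A)\bowtie I$ for every $i\in I$, and every element of $\Ze(A)\bowtie I$ has first coordinate in $\Ze(A)$, comparing first coordinates gives $i\in\Ze(A)$; this half does not use the Pr\"ufer hypothesis. For ``$\Rightarrow$'', assume $I\subseteq\Ze(A)$. Then $S_{2}\subseteq\Ze(A)\bowtie I$ is immediate from $(i,0)=(i,i+(-i))$ with $i\in\Ze(A)$ and $-i\in I$. It remains to handle $S_{3}$; since an element $(a,a+i)\in S_{3}$ has regular first coordinate, it can never lie in $\Ze(A)\bowtie I$, so the real task is to prove that $S_{3}$ is empty.

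Here the local Pr\"ufer hypothesis enters through Lemma~\ref{P3.2}. Suppose $a$ is regular, $0\ne j\in I$, $i\in I$, and $j(a+i)=0$. Since $a$ is regular, $aA$ is comparable with the principal ideal $(a+i)A$. If $aA\subseteq(a+i)A$, write $a=(a+i)e$; multiplying $j(a+i)=0$ by $e$ yields $ja=0$, hence $j=0$ by regularity of $a$ --- a contradiction. If $(a+i)A\subseteq aA$, write $a+i=ac$, so $i=a(c-1)$; note $i\ne0$ (otherwise $ja=j(a+i)=0$ forces $j=0$), so $i$ is a nonzero zero divisor, and cancelling the regular $a$ shows $c-1\in\Ze(A)\subseteq\m$; thus $c=1+(c-1)$ is a unit of the local ring $A$, and $0=j(a+i)=ajc$ gives $j=0$ after cancelling $a$ and $c$ --- again a contradiction. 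Hence $S_{3}=\emptyset$ and $\Ze(A\bowtie I)=\Ze(A)\bowtie I$.

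The only genuine obstacle is showing $S_{3}=\emptyset$; the rest is bookkeeping with Lemma~\ref{P3.1}. The crux is to convert the regularity of $a$ into the comparability supplied by Lemma~\ref{P3.2}, and then to exploit that in a local ring $\Ze(A)\subseteq\m$, so that $1+(c-1)$ is automatically invertible.
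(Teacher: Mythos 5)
Your proof is correct and follows essentially the same route as the paper: both reduce the statement, via the Maimani--Yassemi description in Lemma~\ref{P3.1}, to showing the third set is empty, and both get this from the comparability supplied by Lemma~\ref{P3.2} together with the fact that $\Ze(A)\subseteq\m$ in a local ring. The only cosmetic difference is that the paper compares $aA$ with $iA$ to show directly that $a+i$ is regular, whereas you compare $aA$ with $(a+i)A$ and run a two-case contradiction; the content is the same.
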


%%%%%%%%%%%%%%%%%
%%%%%%%%%%%%%%%%%
\begin{proof}
Assume $I\subseteq \Ze(A)$. Let $a$ be a regular element of $A$ and let $i\in I$. We claim that $a+i$ is regular in $A$. Indeed, the ideals $aA$ and $iA$ are comparable by Lemma~\ref{P3.2}. It follows that $i=ka$ for some non-unit $k\in A$ since $I\subseteq \Ze(A)$. Thus $a+i=(1+k)a\in A\setminus\Ze(A)$, as claimed. Consequently,
the set $\big\{(a, a+i)\mid a\ \mbox{regular and}\ j(a+i)=0\ \mbox{for some}\ 0\not=j\in I\big\}$ is empty. In view of the description of $\Ze(A\bowtie I)$ in Lemma~\ref{P3.1}, it merely collapses to $\Ze(A)\bowtie I$, as desired. The converse is trivial by the same lemma.
\end{proof}

%%%%%%%%%%%%%%%%%%%%%%%%%%%%%%%%%%%%%%%%%%%%%%%%%%%%%%%%%%%%%%%%%%%%%%%%%%%%%%%%%%%%%%%%%%%%
\begin{proof}[Proof of Theorem~\ref{P1}]
(1) $(A,\m)$ is assumed to be local and $I\subseteq \m$. This is equivalent to saying that $A\bowtie I$ is local. Suppose that $A\bowtie I$ is Pr\"ufer. By Lemma~\ref{P3}, $A$ is Pr\"ufer. Note that $\Ze(A)\subseteq\m$. We claim that $I\subseteq \Ze(A)$. Deny and let $i\in I\setminus \Ze(A)$. Clearly, $(i,i)$ is regular in $A\bowtie I$. By Lemma~\ref{P3.2}, the ideals $\big((0,i)\big)$ and $\big((i,i)\big)$ must be comparable in $A\bowtie I$ and, necessarily, $(0,i)=(i,i)(b,b+j)$ for some $b\in A$ and $j\in I$. So that $b=0$ and $i=ij$, whence $j=1$, the desired contradiction. Next, let $a\in A\setminus \Ze(A)$ and $i\in I$. By Lemma~\ref{P4}, $(a,a+i)$ is regular in $A\bowtie I$. As above, via Lemma~\ref{P3.2}, we get $(0,i)=(a,a+i)(b,b+j)$ for some $b\in A$ and $j\in I\subseteq \m$. Therefore, $b=0$ and thus $i=aj(1-j)^{-1}\in aI$, as desired.

Conversely, suppose $A$ is a (local) Pr\"ufer ring with $I=aI$ for every $a\in \m\setminus \Ze(A)$ (i.e., for every $a\in A\setminus \Ze(A)$). Let $F:=\big((a,a+i),(b,b+j)\big)$ be a regular ideal of $A\bowtie I$. Assume one, at least, of the two generators of $F$ is regular. By Lemma~\ref{P4}, $a$ or $b$ is regular in $A$. So, by Lemma~\ref{P3.2}, $(a)$ and $(b)$ are comparable in $A$; say, $a$ is regular in $A$ and $b=ac$ for some $c\in A$. By hypothesis, there is $k\in I$ such that $j-ic=(a+i)k$. So one can easily check that $(b,b+j)=(a,a+i)(c,c+k)$; i.e., $F:=\big((a,a+i)\big)$. Now, assume both generators of $F$ are zero divisors and let $(r, r+h)$ be a regular element of $F$. Similar arguments as above yield $a=ra'$, $i-ha'=(r+h)k_{1}$, $b=rb'$, and $j-hb'=(r+h)k_{2}$, for some $a',b'\in A$ and $k_{1}, k_{2}\in I$; leading to $F:=\big((r,r+h)\big)$. So in both cases $F$ is principal (and, a fortiori, invertible) making $A\bowtie I$ a Pr\"ufer ring \cite[Theorem 2.13(2)]{BG}. This completes the proof of the local case.
\end{proof}

As an application of Theorem~\ref{P1} (combined with Theorem~\ref{AGfqp1}), one can construct new examples of (non-Gaussian) Pr\"ufer rings as shown below.

%%%%%%%%%%%%%%%%%%%%%%%%%%%%%%%%%%%%%%%%%%%%%%%%%%%%%%%%%%%%%%%%%%%%%%
%%%%%%%%%%%%%%%%%%%%%%%%%%%%%%%%%%%%%%%%%%%%%%%%%%%%%%%%%%%%%%%%%%%%%%
\begin{example}\label{P5.2}
Let $R:=\frac{\Z}{8\Z}\bowtie \frac{2\Z}{8\Z}$. We have $\Ze(\frac{\Z}{8\Z})=\frac{2\Z}{8\Z}$ and hence, by Theorem~\ref{P1}, $R$  is a local Pr\"ufer ring (which is not Gaussian by Theorem~\ref{AGfqp1}(2)). Further, $R$ is neither a trivial ring extension nor a pullback of the type studied in \cite{Bo1,Bo2,Bo3}.
\end{example}

Total rings of quotients are important source of Pr\"ufer rings. Next, we study the transfer of this notion to an amalgamation.

%%%%%%%%%%%%%%%%%%%%%%%%%%%%%%%%%%%%%%%%%%%%%%%%%%%%%%%%%%%%%%%%%%%%%%
%%%%%%%%%%%%%%%%%%%%%%%%%%%%%%%%%%%%%%%%%%%%%%%%%%%%%%%%%%%%%%%%%%%%%%
\begin{proposition}\label{P6}
Let $A$ be a ring and $I$ an ideal of $A$ such that $I\subseteq \J(A)$. Then $A$ is a total ring of quotients if and only if $A\bowtie I$ is a total ring of quotients.
\end{proposition}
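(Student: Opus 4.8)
The plan is to prove both implications by analyzing the zero divisors of $A\bowtie I$ via Lemma~\ref{P3.1} and using the hypothesis $I\subseteq \J(A)$ to control invertibility. Recall that a ring $R$ is a total ring of quotients precisely when every element of $R$ is either a unit or a zero divisor, i.e. $R\setminus\Ze(R)=\U(R)$. So in both directions I want to match up units on one side with units on the other.

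First I would prove the ``only if'' direction. Assume $A$ is a total ring of quotients, and take an element $(a,a+i)\in A\bowtie I$ that is \emph{not} a zero divisor. By Lemma~\ref{P3.1}, the fact that $(a,a+i)\notin\Ze(A\bowtie I)$ forces, in particular, $a\notin\Ze(A)$ (since otherwise $(a,a+i)$ would lie in $\Ze(A)\bowtie I$); also $i$ cannot be such that $(a,a+i)$ falls into the third set. Since $A$ is a total ring of quotients and $a$ is regular, $a\in\U(A)$. Now I claim $(a,a+i)\in\U(A\bowtie I)$: because $i\in\J(A)$ and $a$ is a unit, $a+i$ is a unit of $A$ as well (a unit plus an element of the Jacobson radical), and moreover $a+i\in a+I$, so $(a,a+i)\cdot(a^{-1},(a+i)^{-1})$ makes sense as an element of $A\bowtie I$ provided $(a^{-1},(a+i)^{-1})$ really lies in $A\bowtie I$; one checks $(a+i)^{-1}-a^{-1}=a^{-1}(a+i)^{-1}(a-(a+i))=-a^{-1}(a+i)^{-1}i\in I$, so indeed it does. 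Hence $(a,a+i)$ is a unit, and $A\bowtie I$ is a total ring of quotients.

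For the ``if'' direction, assume $A\bowtie I$ is a total ring of quotients and let $a\in A\setminus\Ze(A)$; I must show $a\in\U(A)$. Consider $(a,a)\in A\bowtie I$. Using Lemma~\ref{P3.1}, since $a$ is regular in $A$ one checks $(a,a)\notin\Ze(A)\bowtie I$ and $(a,a)\notin\{(i,0)\mid i\in I\}$; and $(a,a)$ is not in the third set because $j\cdot a=0$ with $a$ regular forces $j=0$. Hence $(a,a)$ is a regular element of $A\bowtie I$, so by hypothesis it is a unit: $(a,a)(b,b+k)=(1,1)$ for some $b\in A$, $k\in I$. Then $ab=1$, so $a\in\U(A)$, as desired.

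The step I expect to require the most care is verifying, in the ``only if'' direction, that the candidate inverse $(a^{-1},(a+i)^{-1})$ genuinely belongs to $A\bowtie I$, i.e. that $(a+i)^{-1}-a^{-1}\in I$ — this is exactly where the hypothesis $I\subseteq\J(A)$ is used (to guarantee $a+i$ is a unit) and where one must be a little careful with the algebra of the duplication. Everything else is a routine unwinding of Lemma~\ref{P3.1} and the definition of a total ring of quotients.
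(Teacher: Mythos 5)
Your proposal is correct and follows essentially the same route as the paper: both directions reduce to matching units and zero divisors of $A\bowtie I$ with those of $A$, and your verification that $(a^{-1},(a+i)^{-1})\in A\bowtie I$ (via $(a+i)^{-1}-a^{-1}=-a^{-1}(a+i)^{-1}i\in I$) is exactly the paper's explicit inverse $(y,\,y-iy^{2}(1+yi)^{-1})$ in disguise, with $I\subseteq\J(A)$ used in the same place to make $a+i$ a unit. The only cosmetic difference is that you invoke the full description of $\Ze(A\bowtie I)$ from Lemma~\ref{P3.1}, whereas the paper's forward direction gets by with the easy inclusion $\Ze(A)\bowtie I\subseteq\Ze(A\bowtie I)$.
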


%%%%%%%%%%%%%%%%%
%%%%%%%%%%%%%%%%%
\begin{proof}
Assume $A$ is a total ring of quotients and let $(x,x+i)\in A\bowtie I$. If $x$ is a zero divisor in $A$, then so is (x,x+i) in $A\bowtie I$ since $\Ze(A)\bowtie I\subseteq \Ze(A\bowtie I)$ always holds. Now suppose that $x$ is invertible in $A$ and let $y:=x^{-1}$ and $j:=-iy^{2}(1+yi)^{-1}$. Since $I\subseteq \J(A)$, then $j\in I$. Further, we have $(x,x+i)(y,y+j)=(1,1)$. So $(x,x+i)$ is invertible in $A\bowtie I$. Conversely, assume $A\bowtie I$ is a total ring of quotients and let $x\in A$. Then $(x,x)$ is either a zero divisor or invertible in $A\bowtie I$. Clearly, this forces $x$ to be either a zero divisor or invertible in $A$, completing the proof.
\end{proof}

Let $(A,\m)$ be a local ring and let $n$ be an integer $\geq2$. By Proposition~\ref{P6},  $\frac{A}{\m^{n}}\bowtie \frac{\m^{n-1}}{\m^{n}}$ $\left(=\ \frac{A}{\m^{n}}\ltimes \frac{\m^{n-1}}{\m^{n}}\right)$ is a local total ring of quotients and, a fortiori, a local Pr\"ufer ring.

Recall that the notion of Pr\"ufer ring is not stable under factor rings \cite[Example 3.3]{BSh} (also \cite[Example 3.6]{L} and \cite[Example 2.8]{BKM}). A ring $R$ is locally Pr\"ufer if $R_{p}$ is Pr\"ufer $\forall\ p\in\Spec(R)$ \cite[Definition 2.1]{Bo3}. Lucas proved that if $R_{\m}$ is Pr\"ufer $\forall\ \m\in\Max(R)$ (a fortiori, if $R$ is locally Pr\"ufer), then $R$ is Pr\"ufer \cite[Proposition 2.10]{L}; and constructed a non-local Pr\"ufer ring which is not locally Pr\"ufer \cite[Example 2.11]{L}. Recently, Boynton provided an example of a local Pr\"ufer ring which is not locally Pr\"ufer \cite[Example 2.4]{Bo3}.

%%%%%%%%%%%%%%%%%%%%%%%%%%%%%%%%%%%%%%%%%%%%%%%%%%%%%%%%%%%%%%%%%%%%%%
%%%%%%%%%%%%%%%%%%%%%%%%%%%%%%%%%%%%%%%%%%%%%%%%%%%%%%%%%%%%%%%%%%%%%%
\begin{question}
Is  Theorem~\ref{P1} valid in the global case? i.e., when $A$ is Pr\"ufer (not necessarily local) or locally Pr\"ufer. One, particularly, needs to find the \emph{right and natural} globalization for the assumption ``$I=aI,\ \forall\ a\in \m\setminus \Ze(A)$."
\end{question}

%%%%%%%%%%%%%%%%%%%%%%%%%%%%%%%%%%%%%%%%%%%%%%%%%%%%%%%%%%%%%%%%%%%%%%%%%%%%%%%%%%%%%%%%%%%%%%%%%%%%%%%%%%%%%%%%%%%%%%%%%%%%%%%%%%%%%%%%%%%%%%
%%%%%%%%%%%%%%%%%%%%%%%%%%%%%%%%%%%%%%%%%%%%%%%%%%%%%%%%%%%%%%%%%%%%%%%%%%%%%%%%%%%%%%%%%%%%%%%%%%%%%%%%%%%%%%%%%%%%%%%%%%%%%%%%%%%%%%%%%%%%%%
%%%%%%%%%%%%%%%%%%%%%%%%%%%%%%%%%%%%%%%%%%%%%%%%%%%%%%%%%%%%%%%%%%%%%%%%%%%%%%%%%%%%%%%%%%%%%%%%%%%%%%%%%%%%%%%%%%%%%%%%%%%%%%%%%%%%%%%%%%%%%%
%%%%%%%%%%%%%%%%%%%%%%%%%%%%%%%%%%%%%%%%%%%%%%%%%%%%%%%%%%%%%%%%%%%%%%%%%%%%%%%%%%%%%%%%%%%%%%%%%%%%%%%%%%%%%%%%%%%%%%%%%%%%%%%%%%%%%%%%%%%%%%
% remember to contrast/compare with Kabbour/Mahdou results in thesis/paper on the arithmetical notion
%%%%%%%%%%%%%%%%%%%%%%%%%%%%%%%%%%%%%%%%%%%%%%%%%%%%%%%%%%%%%%%%%%%%%%%%%%%%%%%%%%%%%%%%%%%%%%%%%%%%%%%%%%%%%%%%%%%%%%%%%%%%%%%%%%%%%%%%%%%%%%
\section{Transfer of the arithmetical, Gaussian, and fqp conditions}\label{AGfqp}

A ring $R$ is arithmetical if the ideals of any localization of $R$ are linearly ordered; equivalently, if every finitely generated ideal of $R$ is locally principal \cite{Fu,J}. A local arithmetical ring is also called a chained or valuation ring. The ring $R$ is Gaussian if for every $f, g$ in the polynomial ring $R[x]$, one has the content ideal equation $c(fg) = c(f)c(g)$ \cite{T}. Both arithmetical and Gaussian  notions are local; i.e., a ring is arithmetical (resp., Gaussian) if and only if its localizations with respect to maximal ideals are arithmetical (resp., Gaussian). We will make frequent use of an important characterization of a local Gaussian ring; namely, ``\emph{for any two elements $a, b$ in the ring, we have $(a,b)^{2}=(a^{2})$ or $(b^{2})$; moreover, if $ab=0$ and, say, $(a,b)^{2}=(a^{2})$, then $b^{2}=0$}" \cite[Theorem 2.2]{BG2}.

An ideal $I$ of a ring $R$ is quasi-projective if the natural map $\Hom_{R}(I,I)\rightarrow\Hom_{R}(I,I/J)$, defined by $f\mapsto \overline{f}$, is surjective for every subideal $J$ of $I$. A ring $R$ is an fqp-ring if every finitely generated ideal of $R$ is quasi-projective \cite{AJK}. An arithmetical ring is an fqp-ring and an fqp-ring is Gaussian, where the implications are irreversible in general \cite[Theorem 3.2]{AJK}. It is worthwhile recalling, at this point, that the fqp condition is stable under formation of rings of fractions \cite[Lemma 3.6]{AJK}; though, the question of whether it is a local property is still elusively open \cite{AJK}. As mentioned in the introduction, we shall omit the case $A\bowtie A=A\times A$ since the fqp property, too, is stable under finite products as shown below.

%%%%%%%%%%%%%%%%%%%%%%%%%%%%%%%%%%%%%%%%%%%%%%%%%%%%%%%%%%%%%%%%%%%%%%
%%%%%%%%%%%%%%%%%%%%%%%%%%%%%%%%%%%%%%%%%%%%%%%%%%%%%%%%%%%%%%%%%%%%%%
\begin{remark}\label{fqp1}
Let  $R_{1}$ and $R_{2}$ be two fqp-rings, $R:=R_{1}\times R_{2}$, and $I:=I_{1}\times I_{2}$, where $I_{i}$ is a finitely generated ideal of $R_{i}$
for $i=1,2$. Let $f: I\rightarrow I/K$ be an $R$-map, where $K$ is a subideal of $I$ and write $K=K_{1}\times K_{2}$ and  $f=f_{1}\times f_{2}$, where $K_{i}$  is a subideal of $I_{i}$ and $f_{i}\in\Hom_{R}(I_{i},I_{i}/K_{i})$ defined by $f_{1}(x):=a$ such that $f(x,0)=(a,b)$ and similarly for $f_{2}$. Therefore, there is $g_{i}\in\Hom_{R}(I_{i},I_{i})$ such that $\overline{g_{i}}=f_{i}$. It is clear that $\overline{g}=\overline{g_{1}}\times \overline{g_{2}}=f $. It follows that $R$ is an fqp-ring. The converse is more straightforward.
\end{remark}

The main result of this section examines necessary and sufficient conditions for amalgamations issued from local rings to inherit the notions of arithmetical, Gaussian, and fqp-ring, respectively. In particular, it turns out that, among amalgamated duplications of local rings, only trivial extensions can inherit the Gaussian or fqp properties. Thereby, a second result examines the global case.

%%%%%%%%%%%%%%%%%%%%%%%%%%%%%%%%%%%%%%%%%%%%%%%%%%%%%%%%%%%%%%%%%%%%%%
% remember to contrast/compare with Kabbour/Mahdou results in thesis/paper on the arithmetical notion
%%%%%%%%%%%%%%%%%%%%%%%%%%%%%%%%%%%%%%%%%%%%%%%%%%%%%%%%%%%%%%%%%%%%%%
\begin{thm}\label{AGfqp1}
Let $(A, \m)$ be a local ring and $I$ a proper ideal of $A$. Then:
\begin{enumerate}
\item $A\bowtie I$ is arithmetical if and only if $A$ is arithmetical and $I=0$.
\item $A\bowtie I$ is Gaussian  if and only if  $A$ is Gaussian, $I^{2}=0$, and $aI=a^{2}I\ \forall\ a\in \m$.
\item $A\bowtie I$ is an fqp-ring  if and only if $A$ is an fqp-ring (resp., a Pr\"ufer ring), $\big(\Ze(A)\big)^{2}=0$, and $aI=a^{2}I\ \forall\ a\in \m$.
\end{enumerate}
\end{thm}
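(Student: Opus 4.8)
The plan is to prove the three statements in a unified way, exploiting the chain of implications "arithmetical $\Rightarrow$ fqp $\Rightarrow$ Gaussian" together with the fact that all three conditions are local, so that throughout we may work with the local ring $A\bowtie I$ (since $I\subseteq\m$ forces $A\bowtie I$ local by Remark~\ref{P5}). For the forward directions, the key observation is that $A$ is a retract of $A\bowtie I$ via $a\mapsto (a,a)$ together with the split surjection $A\bowtie I\twoheadrightarrow A$, $(a,a+i)\mapsto a$, so each of the three properties descends from $A\bowtie I$ to $A$ (for the Gaussian case use Lemma~\ref{P2}; for the fqp case one checks directly that a retract of an fqp-ring is an fqp-ring, or invokes that $A\cong (A\bowtie I)/\big((0)\bowtie I\big)$ is handled by the retract argument on finitely generated ideals). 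The real content of the forward directions is to extract the constraints on $I$: test the local Gaussian criterion of \cite[Theorem 2.2]{BG2} on well-chosen pairs of elements of $A\bowtie I$. Applying it to $\big((0,i),(i,i)\big)$ should force $(0,i)^2=0$, i.e. $I^2=0$, after ruling out the other alternative via the "moreover" clause (since $(0,i)\cdot(i,i)$ need not vanish, one picks the pair so that the product is zero, e.g. $(0,i)$ and $(i,0)$, giving $(i,0)^2=0$ or $(0,i)^2=0$, hence $i^2=0$). Applying it to $\big((a,a),(0,i)\big)$ for $a\in\m$ should yield $aI=a^2I$ by comparing $\big((a,a),(0,i)\big)^2$ with $(a^2,a^2)$ or $(0,i^2)=0$. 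For part (1), arithmeticity of $A\bowtie I$ means ideals are totally ordered, and comparing $\big((1,1+i)\big)$-type principal ideals — more precisely $\big((0,i)\big)$ with $\big((i,0)\big)$, which are incomparable unless $i=0$ — forces $I=0$.

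For the converse directions, suppose $A$ has the relevant property and $I$ satisfies the stated conditions. In part (1), $I=0$ gives $A\bowtie I\cong A$ and there is nothing to prove. In part (2), with $A$ Gaussian, $I^2=0$, and $aI=a^2I$ for all $a\in\m$, the amalgamation is exactly the trivial extension $A\bowtie I=A\ltimes I$ (since $I^2=0$), and one verifies the local Gaussian criterion \cite[Theorem 2.2]{BG2} directly on an arbitrary pair $\big((a,a+i),(b,b+j)\big)$: using that $A$ is Gaussian we may assume $(a,b)^2=(a^2)$ in $A$, write $b=ca$ (when $a$ is regular or, in the zero-divisor case, using the $ab=0\Rightarrow b^2=0$ clause of the criterion in $A$), and then the hypothesis $aI=a^2I$ produces the element needed to express the second generator's square inside the ideal generated by the first generator's square; the nilpotence $I^2=0$ handles the cross terms. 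In part (3), the plan is to show directly that every finitely generated — hence every two-generated, since by the Gaussian property of $A\bowtie I$ already established, finitely generated ideals reduce to two-generated ones — ideal $F=\big((a,a+i),(b,b+j)\big)$ of $A\bowtie I$ is quasi-projective. Here $\big(\Ze(A)\big)^2=0$ together with $aI=a^2I$ should allow one to mimic the trivial-extension computation of \cite{AJK,BKM}: split into the case where one generator is regular (then by Lemma~\ref{P3.2}-type comparability $F$ is principal, hence free or at least quasi-projective) and the case where both are zero divisors (then $\big(\Ze(A)\big)^2=0$ makes $F$ essentially a module over $A/\Ze(A)$-like quotient on which quasi-projectivity is checkable), and the equivalence "fqp-ring or Pr\"ufer ring" on the $A$ side reflects that under $\big(\Ze(A)\big)^2=0$ the two coincide for $A$ by \cite[Theorem 3.2]{AJK} or the diagram in the introduction.

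The main obstacle I expect is the fqp converse, part (3): quasi-projectivity is not a local property and does not follow formally from the Gaussian computation, so one must genuinely exhibit, for each two-generated ideal $F$ of $A\bowtie I$ and each subideal $K$, a lift in $\Hom_{A\bowtie I}(F,F)$ of a given map $F\to F/K$. The bookkeeping when both generators lie in $\Ze(A\bowtie I)$ — whose structure is given explicitly by Lemma~\ref{P3.1} — is delicate, because $\Ze(A\bowtie I)$ properly contains $\Ze(A)\bowtie I$ in general, and one must use $\big(\Ze(A)\big)^2=0$ to kill the problematic products while keeping track of the $I$-components via $aI=a^2I$. A secondary subtlety is justifying the parenthetical "(resp., a Pr\"ufer ring)": one must argue that under $\big(\Ze(A)\big)^2=0$ a local Pr\"ufer ring is automatically an fqp-ring, so that the two hypotheses on $A$ are interchangeable; this should follow by combining Theorem~\ref{P1}'s description with the known collapse of Pr\"ufer-like conditions for rings whose zero divisors square to zero.
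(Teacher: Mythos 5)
Your treatment of parts (1) and (2) follows essentially the same route as the paper: descend the property to $A$ (factor ring/retract, Lemma~\ref{P2}), then test the Bazzoni--Glaz local Gaussian criterion on the pairs $\big((i,0),(0,i)\big)$ and $\big((a,a),(0,i)\big)$ to extract $I=0$, $I^{2}=0$, and $aI=a^{2}I$, and verify the criterion directly for the converse. Two small cautions there: squaring elements of $I$ only gives $i^{2}=0$ for each $i$, and you still need the Gaussian property of $A$ itself to pass from this to $ij=0$ (the paper does exactly this); and in the converse of (2) you cannot in general ``write $b=ca$'' from $(a,b)^{2}=(a^{2})$ --- the correct data are $b^{2}=a^{2}x$ and $ab=a^{2}y$, and the computation must be run on squares, as your own next clause suggests.

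The genuine gaps are in part (3), in both directions. Forward: your plan extracts constraints on $I$ only via the Gaussian criterion, which can never yield $\big(\Ze(A)\big)^{2}=0$ --- for instance $\Z/8\Z$ is a chained (hence Gaussian) local ring with $\big(\Ze(\Z/8\Z)\big)^{2}\neq 0$. The paper's proof of this step is structural, not computational: it invokes the Singh--Mohammad theorem that a local fqp-ring which is not chained satisfies $\big(\Nil(R)\big)^{2}=0$ together with $\Ze(R)=\Nil(R)$ (Lemmas~\ref{fqp4} and \ref{fqp5}), applied to $R=A\bowtie I$, and then uses Lemma~\ref{P4} to identify $\Ze(A\bowtie I)$ with $\Ze(A)\bowtie I$. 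Nothing in your outline supplies a substitute for this input; likewise the descent of the fqp property to $A$ (your ``retract'' claim) is not formal --- the paper's Lemma~\ref{fqp3} has to build the ideal $J\bowtie JI$, prove it is generated by the $(a_{i},a_{i})$, and transport lifts of maps back and forth. Converse: your reduction of finitely generated ideals to two-generated ones ``by the Gaussian property'' is unjustified (Gaussian rings need not have this reduction), and the actual verification of quasi-projectivity --- which you flag as the expected obstacle --- is precisely where the paper's argument lives: one shows that any two incomparable generators have zero-divisor first coordinates, equal annihilators (namely $\Ze(A)\bowtie I$), and trivially intersecting principal ideals, so that a minimal generating set decomposes $J$ as a direct sum of mutually isomorphic principal ideals, and then the Fuller--Hill criterion \cite[Corollary 1.2]{FH} delivers quasi-projectivity. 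Without the annihilator/direct-sum decomposition and the Fuller--Hill input, the converse of (3) is not proved.
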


The proof of this theorem draws on the following results.

%%%%%%%%%%%%%%%%%%%%%%%%%%%%%%%%%%%%%%%%%%%%%%%%%%%%%%%%%%%%%%%%%%%%%%
%%%%%%%%%%%%%%%%%%%%%%%%%%%%%%%%%%%%%%%%%%%%%%%%%%%%%%%%%%%%%%%%%%%%%%
\begin{lemma}\label{fqp2}
Let $A$ be a ring and $I$ a proper ideal of $A$. Let $J$ be an ideal of $A$ and $K$ a subideal of $I$. Then $J\bowtie K$ is an ideal of $A\bowtie I$ if and only if $JI\subseteq K$.
\end{lemma}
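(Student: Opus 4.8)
The plan is to unwind the definition of an ideal of $A\bowtie I$ directly, exploiting the explicit description of the ring multiplication. Recall that a general element of $A\bowtie I$ has the form $(a,a+i)$ with $a\in A$, $i\in I$, and the set $J\bowtie K=\{(j,j+k)\mid j\in J,\ k\in K\}$ is automatically a subgroup of $(A\bowtie I,+)$ whenever $J$ is an ideal of $A$ and $K$ is a subideal of $I$ (closure under addition and negation is immediate). So the only issue is closure under multiplication by arbitrary elements of $A\bowtie I$: the set $J\bowtie K$ is an ideal of $A\bowtie I$ if and only if $(a,a+i)\cdot(j,j+k)\in J\bowtie K$ for every $a\in A$, $i\in I$, $j\in J$, $k\in K$.

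The key computation is that $(a,a+i)(j,j+k)=(aj,\ aj+(ak+ij+ik))$. Writing this in the form $(j',j'+k')$ with $j'\in A$ and $k'\in I$ forces $j'=aj$ (which lies in $J$ since $J$ is an ideal) and $k'=ak+ij+ik$. Now $ak$ and $ik$ both lie in $K$ because $K$ is a subideal of $I$ (hence an ideal of $A$), so $(a,a+i)(j,j+k)\in J\bowtie K$ if and only if the remaining term $ij$ lies in $K$. Thus closure under multiplication is equivalent to the condition $ij\in K$ for all $i\in I$ and $j\in J$, i.e., to $IJ\subseteq K$. This establishes both implications at once: if $J\bowtie K$ is an ideal then taking $a=0$ in the product already forces $ij\in K$, so $IJ\subseteq K$; and conversely, if $IJ\subseteq K$ then the displayed product always lies in $J\bowtie K$, so $J\bowtie K$ absorbs multiplication.

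There is essentially no obstacle here — the argument is a one-line inspection of the multiplication formula in $A\bowtie I$. The only point requiring a moment's care is the bookkeeping: one must check that the second coordinate of the product, namely $aj+ak+ij+ik$, really does decompose as (first coordinate) $+$ (element of $I$), which it does since $ak+ij+ik\in I$, and then isolate precisely which of the three summands $ak$, $ij$, $ik$ need not automatically lie in $K$. Since $K$ is stable under multiplication by $A$, the terms $ak$ and $ik$ are free, and the single genuine constraint is $ij\in K$, giving the stated criterion $IJ\subseteq K$.
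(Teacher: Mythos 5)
Your proof is correct and is exactly the direct verification the paper has in mind when it declares the lemma ``straightforward'': the product formula $(a,a+i)(j,j+k)=(aj,\;aj+ak+ij+ik)$ reduces everything to whether $ij\in K$, since $ak$ and $ik$ already lie in $K$. Nothing is missing.
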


%%%%%%%%%%%%%%%%%
%%%%%%%%%%%%%%%%%
\begin{proof}
The proof is straightforward and may be left to the reader.
\end{proof}

%%%%%%%%%%%%%%%%%%%%%%%%%%%%%%%%%%%%%%%%%%%%%%%%%%%%%%%%%%%%%%%%%%%%%%
%%%%%%%%%%%%%%%%%%%%%%%%%%%%%%%%%%%%%%%%%%%%%%%%%%%%%%%%%%%%%%%%%%%%%%
\begin{lemma}\label{fqp3}
Let $A$ be a ring and $I$ a proper ideal of $A$. If $A\bowtie I$ is an fqp-ring, then $A$ is an fqp-ring.
\end{lemma}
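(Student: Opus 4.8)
The plan is to transfer the fqp property from $A\bowtie I$ down to $A$ by using the ring retraction $A\bowtie I \twoheadrightarrow A$ (sending $(a,a+i)\mapsto a$, whose kernel is $(0)\bowtie I$) together with the section $A\hookrightarrow A\bowtie I$, $a\mapsto (a,a)$. Since the fqp condition is stable under rings of fractions by \cite[Lemma 3.6]{AJK} but is \emph{not} known to localize, I cannot simply reduce to a statement about localizations; instead I will work directly with a finitely generated ideal $J=\sum_{k=0}^{n}a_kA$ of $A$ and produce the required lifts of homomorphisms from the corresponding data over $A\bowtie I$.

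First I would set $G:=\sum_{k=0}^{n}(a_k,a_k)(A\bowtie I)$, the ideal of $A\bowtie I$ generated by the images of the $a_k$. A short computation identifies $G$ explicitly: it equals $\{(x,x+i)\mid x\in J,\ i\in IJ'\}$ for the appropriate choice (one checks $G = J\bowtie (IJ)$-type description via Lemma~\ref{fqp2}, since $JI\subseteq IJ$), and in particular $G$ is a finitely generated ideal of $A\bowtie I$. Now let $L$ be any subideal of $J$ and $f\colon J\to J/L$ an $A$-linear map; I want $g\in\Hom_A(J,J)$ with $\bar g=f$. The idea is to inflate this to data over $A\bowtie I$: extend $L$ to a subideal $\tilde L$ of $G$ (e.g. $\tilde L := L\bowtie (IJ)$, which is a subideal of $G$ by Lemma~\ref{fqp2} once one checks the compatibility $J I \subseteq IJ$, so that $G/\tilde L$ maps onto $J/L$ under the retraction), and lift $f$ through the retraction to an $(A\bowtie I)$-map $F\colon G\to G/\tilde L$. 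Because $A\bowtie I$ is an fqp-ring, $G$ is quasi-projective, so there is $\Phi\in\Hom_{A\bowtie I}(G,G)$ with $\overline{\Phi}=F$. Finally, compose with the retraction $A\bowtie I\to A$ (and use the section to view things over $A$): the map $\Phi$ restricted to the copy of $J$ sitting in $G$, followed by $(a,a+i)\mapsto a$, gives an $A$-endomorphism $g$ of $J$, and chasing the diagram shows $\bar g = f$.

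The one genuine point of care — and the step I expect to be the main obstacle — is the bookkeeping that makes the diagram commute: one must choose $\tilde L$ and the lift $F$ so that the square relating $G\to G/\tilde L$ (over $A\bowtie I$) and $J\to J/L$ (over $A$) via the retraction actually commutes, and then verify that pushing the lift $\Phi$ back down lands inside $\Hom_A(J,J)$ rather than merely $\Hom_A(J, J\bowtie I)$. This is where the hypothesis that $I$ is \emph{proper} is used (to keep $A\bowtie I$ from collapsing to $A\times A$ and to ensure the retraction behaves), and it is the reason the argument is phrased through the explicit generators rather than abstract nonsense. Everything else — that $G$ is finitely generated, that $\tilde L$ is an ideal, that the various maps are well defined — is routine via Lemma~\ref{fqp2} and may be checked directly.
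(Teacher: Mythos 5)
Your proof is correct and follows essentially the same route as the paper's: inflate $J$ to the finitely generated ideal $U=J\bowtie JI$ of $A\bowtie I$ (generated by the diagonal images $(a_k,a_k)$), apply quasi-projectivity of $U$ there, and push the resulting endomorphism back down to $\Hom_A(J,J)$ via the first-coordinate retraction. The only difference is your choice of subideal $\tilde L=L\bowtie JI$ in place of the paper's $K\bowtie KI$, which is harmless (indeed it makes $U/\tilde L\cong J/L$ and the well-definedness of $F$ automatic); note only that the properness of $I$ plays no real role in this particular lemma, contrary to your closing remark.
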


%%%%%%%%%%%%%%%%%
%%%%%%%%%%%%%%%%%
\begin{proof}
Assume that $A\bowtie I$ is an fqp-ring and let $J:=(a_{1},...,a_{n})$ be a finitely generated ideal of $A$, $K$ a subideal of $J$, and $f\in \Hom_{A}(J,J/K)$. We need to prove the existence of $g\in \Hom_{A}(J,J)$ such that $f=\overline{g}$ (mod $K$). For this purpose, consider the ideal of $A\bowtie I$ given by $U:=J\bowtie JI$ (Lemma~\ref{fqp2}). We claim that $U=\big((a_{1},a_{1}),...,(a_{n},a_{n})\big)$. Obviously, $(a_{i},a_{i})\in U\ \forall\ i=1, ..., n$. Next, let $(x,x+h)\in U$. We have
\[\begin{array}{rcl}
(x,x+h)     &=      &(x,x)+(0,\sum e_jf_j)\ \big(\mbox{for some } x\in J \mbox{ and } (e_j,f_j)\in J\times I, 1\leq j\leq m\big)\\
            &=      &\sum_{i} (r_i,r_i)(a_i,a_i)+ (0,\sum_j(\sum_{i} s_{ij} a_i)f_j)\ \big(\mbox{for some } r_i, s_{ij} \in A, 1\leq i\leq n\big)\\
            &=      &\sum_{i} (r_i,r_i)(a_i,a_i)+\sum_j(\sum_{i} s_{ij}a_i,\sum s_{ij}a_i)(0,f_j)\\
            &=      &\sum_{i} (r_i,r_i)(a_i,a_i)+\sum_j(0,f_j)\sum_{i} (s_{ij},s_{ij})(a_i,a_i)\\
            &=      &\sum_{i} (r_i,r_i+\sum_{j}f_js_{ij})(a_i,a_i), \mbox{ as desired.}
\end{array}\]
Let $V:=K\bowtie KI$, a subideal of $U$ by Lemma~\ref{fqp2}, and consider the function
\[\begin{array}{cccc}
 F:     &U                                      &\longrightarrow        & U/V \cong J/K \bowtie JI/KI\\
        &\sum_{i=1}^{n} \lambda_{i}(a_i,a_i)    &\longrightarrow        &\sum_{i=1}^{n} \lambda_{i}\big(f(a_i),f(a_i)\big).
 \end{array}\]
One can check that $F$ is well-defined and hence an $A\bowtie I$-map. Since $U$ is quasi-projective, there exists $G\in \Hom_{A\bowtie I}(U,U)$ such that $F=\overline{G}$ (mod $V$). Now, let $a\in J$ and let $g(a)$ equal the first coordinate of $G(a,a)$. Clearly, $g\in \Hom_{A}(J,J)$. Moreover, $\overline{G(a,a)}=F(a,a)=(f(a),f(a))$ yields $f=\overline{g}$.
\end{proof}

%%%%%%%%%%%%%%%%%%%%%%%%%%%%%%%%%%%%%%%%%%%%%%%%%%%%%%%%%%%%%%%%%%%%%%
%%%%%%%%%%%%%%%%%%%%%%%%%%%%%%%%%%%%%%%%%%%%%%%%%%%%%%%%%%%%%%%%%%%%%%
\begin{lemma}[{\cite[Theorem 2]{SM}}]\label{fqp4}
Let $R$ be a local fqp-ring which is not a chained ring. Then $\big(\Nil(R)\big)^{2}=0$.
\end{lemma}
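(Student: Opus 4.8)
Since every fqp-ring is Gaussian, I would begin from the local Gaussian structure theorem \cite[Theorem 2.2]{BG2}: for all $a,b\in R$, either $(a,b)^2=(a^2)$ or $(a,b)^2=(b^2)$; and if in addition $ab=0$ and, say, $(a,b)^2=(a^2)$, then $b^2=0$. A first consequence, Gaussian in nature, is that a reduced local Gaussian ring is a domain (if $ab=0$ then the ``moreover'' clause forces $a^2=0$ or $b^2=0$, hence $a=0$ or $b=0$). Applied to the local, Gaussian, reduced ring $R/\Nil(R)$, this shows $R/\Nil(R)$ is a domain, and a local Gaussian domain, being Pr\"ufer and local, is a valuation domain. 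So $R$ is an extension of the valuation domain $\bar R:=R/N$ by $N:=\Nil(R)$, and $N^2\ne0$ is equivalent to the existence of $x\in N$ with $x^2\ne0$ (if $u,v\in N$ with $uv\ne0$, then $uv\in(u^2)$ or $uv\in(v^2)$, so one of $u^2,v^2$ is nonzero).

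The plan is then to assume $N^2\ne0$ and derive that $R$ is chained, contradicting the hypothesis. Fix $x\in N$ with $x^2\ne0$. I would first handle the ``trivial'' pairs: if $c\notin\Ze(R)$ and $(c,d)^2=(c^2)$, then $cd=c^2s$ forces $d=cs$, so $dR\subseteq cR$; hence any regular element generates an ideal comparable with every principal ideal (this is \cite[Lemma 3.8]{AJK}, already used in Lemma~\ref{P3.2}). Therefore an incomparable pair must consist of zero-divisors, and since $\bar R$ is a domain one has $\Ze(R)=N\cup\bigcup_{0\ne s\in N}\Ann(s)$.

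So suppose there are incomparable $c,d\in\Ze(R)$. Then $I:=(c,d)$ is not principal (in a local ring, incomparable elements never generate a principal ideal), hence $I$ is a finitely generated, non-principal, quasi-projective ideal. Using the Gaussian relations I would normalize the pair so that $cd=0$ and one of $c^2,d^2$ vanishes, say $d^2=0$ (replace $d$ by $d-cs$ where $cd=c^2s$; this leaves $I$ unchanged, and the ``moreover'' clause of \cite[Theorem 2.2]{BG2} then gives $d^2=0$). The heart of the proof is to play the quasi-projectivity of $I$ — and, if needed, of finitely generated ideals built from $c$, $d$ and $x$ — against the element $x$: for an appropriate subideal $J\subseteq I$ (generated by a product such as $cx$, $dx$, or by $d$ itself) one produces an $R$-homomorphism $I\to I/J$ that admits no lift to $\Hom_R(I,I)$ unless $x^2=0$, a contradiction. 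Computing $\Hom_R(I,I)$ and $\Hom_R(I,I/J)$ explicitly uses the normalization $cd=d^2=0$ together with the valuation structure of $\bar R$.

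The step I expect to be the main obstacle is precisely this last one: squeezing out of the quasi-projective lifting property enough annihilator relations to rule out incomparable zero-divisors, and in particular dealing with the ``mixed'' zero-divisors $c$ with $\bar c\ne0$ (those in some $\Ann(s)$, $s\in N$, rather than in $N$), which interact with both the domain part $\bar R$ and the square-zero target. A cleaner but essentially equivalent route would be to first prove that a non-chained local fqp-ring is, up to isomorphism, a square-zero extension of a valuation domain — whence $N^2=0$ is immediate — but that structure theorem is itself the crux and rests on the same lifting property.
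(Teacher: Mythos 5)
The paper itself offers no proof of Lemma~\ref{fqp4}: it is imported verbatim from \cite[Theorem 2]{SM}, so your attempt can only be judged on its own terms. On those terms it has a genuine gap: the one step that carries the entire content of the theorem is announced but never performed. Every step you actually complete (that $R/\Nil(R)$ is a valuation domain, that $\big(\Nil(R)\big)^{2}\neq 0$ reduces to a single $x$ with $x^{2}\neq 0$, that an incomparable pair consists of zero-divisors and can be normalized to $cd=d^{2}=0$) uses only the Gaussian property of $R$, never quasi-projectivity. That cannot suffice: the ring $\frac{\Z}{8\Z}\bowtie\frac{4\Z}{8\Z}$ appearing in Remark~\ref{fqp6} is, by Theorem~\ref{AGfqp1}(1)--(2), a local Gaussian ring which is not chained, yet $(2,2)$ is nilpotent with $(2,2)^{2}=(4,4)\neq 0$. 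So any correct argument must extract something from the fqp hypothesis, and that is precisely the step you defer (``for an appropriate subideal $J$ one produces an $R$-homomorphism $I\to I/J$ that admits no lift unless $x^{2}=0$'') without specifying $J$, the homomorphism, or the lifting computation; you yourself flag it as the main obstacle.

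For the record, the way this is actually done (in \cite{SM}, resurfacing as \cite[Lemma 3.8]{AJK} and in Claims 1--2 of the proof of Theorem~\ref{AGfqp1}(3) above) is: if $a,b$ are incomparable in a local fqp-ring, quasi-projectivity of the nonprincipal ideal $(a,b)$ forces $\Ann(a)=\Ann(b)$ and $aR\cap bR=0$; hence $ab=0$, hence $a\in\Ann(b)=\Ann(a)$ and $b\in\Ann(a)=\Ann(b)$, so $a^{2}=b^{2}=0$. Even granting this, your outline still lacks the second half of the argument: passing from ``every incomparable pair squares to zero'' to ``$x^{2}=0$ for every $x\in\Nil(R)$.'' That step is a descent on comparability: if $x^{2}\neq 0$ then $x$ must be comparable with each of $c,d$, necessarily $c=sx$ and $d=tx$ with $s,t$ again incomparable (hence nilpotent with $s^{2}=st=t^{2}=0$), and iterating drives $c$ into $x^{n}R$ for all $n$, forcing $c=0$ since $x$ is nilpotent --- a contradiction. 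Neither half is present in your proposal, so what you have is an accurate road map rather than a proof.
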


%%%%%%%%%%%%%%%%%%%%%%%%%%%%%%%%%%%%%%%%%%%%%%%%%%%%%%%%%%%%%%%%%%%%%%
%%%%%%%%%%%%%%%%%%%%%%%%%%%%%%%%%%%%%%%%%%%%%%%%%%%%%%%%%%%%%%%%%%%%%%
\begin{lemma}[{\cite[Lemma 4.5]{AJK}}]\label{fqp5}
Let $R$ be a local fqp-ring which is not a chained ring. Then $\Ze(R)=\Nil(R)$.
\end{lemma}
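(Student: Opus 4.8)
The plan is to prove the nontrivial inclusion $\Ze(R)\subseteq\Nil(R)$, since the reverse inclusion $\Nil(R)\subseteq\Ze(R)$ is automatic (every nonzero nilpotent is a zero divisor, and $R\neq 0$). Throughout I would use freely that an fqp-ring is Gaussian, hence Pr\"ufer, so that both the characterization of local Gaussian rings recalled above from \cite[Theorem 2.2]{BG2} and Lemma~\ref{P3.2} are at my disposal; and that, by Lemma~\ref{fqp4}, $\big(\Nil(R)\big)^{2}=0$, whence $\Nil(R)=\{x\in R\mid x^{2}=0\}$.

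The first step is a Gaussian reduction. Let $z\in\Ze(R)$ and fix $0\neq w$ with $zw=0$. Applying the local Gaussian characterization to the pair $\{z,w\}$ gives $(z,w)^{2}=(z^{2})$ or $(w^{2})$, and the ``moreover'' clause, using $zw=0$, forces $w^{2}=0$ in the first case and $z^{2}=0$ in the second. In the second case $z\in\Nil(R)$ and we are done, so the entire problem collapses to ruling out the configuration $z^{2}\neq 0$, $zw=0$, $w^{2}=0$, $w\neq 0$; note then $w\in\Nil(R)$ and, since $wz=w^{2}=0$, the element $w$ annihilates the whole ideal $U:=(z,w)$.

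The main step is where the fqp hypothesis — not merely the Gaussian one — must enter, and it is the step I expect to be the true obstacle. That Gaussian alone is insufficient is witnessed by the idealization $R=V\ltimes(V/\m)$ of a non-field valuation domain $(V,\m)$ along its maximal ideal: one checks directly that $R$ is local, Gaussian, and not chained, yet for $0\neq x\in\m$ the element $(x,0)$ is a \emph{non-nilpotent} zero divisor, so that $\Ze(R)\supsetneq\Nil(R)$. Hence the argument must invoke the quasi-projectivity of the finitely generated ideal $U=(z,w)$. The plan is to feed into the quasi-projectivity of $U$ a carefully chosen homomorphism to a quotient $U/J$ — the natural candidates being $J=(w)$ or $J=zU$ — and to read off the resulting lift $g\in\Hom_{R}(U,U)$: writing $g(z)=\alpha z+\beta w$, applying $g$ to the relation $wz=0$, and exploiting $z^{2}\neq 0$, the aim is to force $(z)$ and $(w)$ to be comparable. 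The branch $z\in(w)$ dies at once, since $z=wt$ yields $z^{2}=w^{2}t^{2}=0$, a contradiction; so one is left with $w\in(z)$, say $w=zs$ with $s\in\m$ (were $s$ a unit, again $z^{2}=w^{2}s^{-2}=0$). The remaining task is to eliminate this surviving branch.

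The genuine difficulty is precisely the elimination of the case $w=zs$, $s\in\m$, $z^{2}s=0$, $zs\neq 0$: here the naive manipulations only reproduce non-nilpotent zero divisors (e.g. $z^{2}$ is again one), so the contradiction must be extracted from the non-chained hypothesis together with the finer structure of $2$-generated quasi-projective ideals. Concretely, I would combine the observation that the incomparable pair witnessing non-chainedness consists of zero divisors (by Lemma~\ref{P3.2}, a regular element has principal ideal comparable to all others) with the comparability produced by the quasi-projective analysis, to show that a non-nilpotent zero divisor cannot coexist with a genuinely incomparable pair. The delicate points are the choice of the subideal $J$ and of the test map $U\to U/J$ so that its lift yields an honest divisibility relation rather than one holding only modulo $\Ann(z)$, and then organizing the case analysis so every branch collapses to $z^{2}=0$ or to a contradiction with non-chainedness. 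This is exactly the content of the $2$-generated fqp-ideal lemmas of \cite{AJK}, and I would either reprove the needed special case directly or cite them.
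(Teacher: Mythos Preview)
The paper does not give a proof of this lemma at all: it is stated as a direct citation of \cite[Lemma 4.5]{AJK} and is used as a black box in the proof of Theorem~\ref{AGfqp1}(3). So there is no ``paper's own proof'' to compare against.

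As for your sketch on its own merits: the preliminary reduction is fine (the Gaussian characterization correctly isolates the configuration $z^{2}\neq 0$, $zw=0$, $w^{2}=0$, and your counterexample $V\ltimes(V/\m)$ correctly shows that something beyond Gaussian is needed), but the proof is not closed. The crucial step --- feeding a test map into the quasi-projectivity of $U=(z,w)$ and extracting a genuine divisibility relation, then eliminating the branch $w=zs$, $s\in\m$, $z^{2}s=0$ --- is described only as a plan (``the aim is to force\ldots'', ``the remaining task is\ldots''), and you explicitly fall back on citing \cite{AJK} at the end. That is circular, since the statement you are proving \emph{is} \cite[Lemma 4.5]{AJK}. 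If you want an honest self-contained argument you must actually carry out the lift-and-compare step: choose $J\subseteq U$ and $f:U\to U/J$ so that any lift $g\in\Hom_{R}(U,U)$ forces a relation contradicting either $z^{2}\neq 0$ or the existence of an incomparable pair. As written, this is a reasonable roadmap but not a proof.
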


%%%%%%%%%%%%%%%%%%%%%%%%%%%%%%%%%%%%%%%%%%%%%%%%%%%%%%%%%%%%%%%%%%%%%%%%%%%%%%%%%%%%%%%%%%%%
\begin{proof}[Proof of Theorem~\ref{AGfqp1}]
(1)  Assume that $A\bowtie I$ is (local) arithmetical (i.e., chained ring). Then $A$, too, is (local) arithmetical since the arithmetical property is stable under factor rings. Moreover, for each $i\in I$, the ideals $(i,0)A\bowtie I$ and $(0,i)A\bowtie I$ are comparable. In case $(i,0)\in (0,i)A\bowtie I$, there is an element $(a,j)\in A\times I$ such that $(0,i)=(a,a+j)(i,0)=(ai,0)$; so that $i=0$. Similarly, the other case yields $i=0$. So, we conclude that $I=0$, as desired. The converse is trivial since $A\bowtie (0)\cong A$.

(2)  Assume $A\bowtie I$ is (local) Gaussian. Then so is $A$ since the Gaussian property is stable under factor rings. Next, we prove $I^{2}=0$. Let $a, b \in I$. In $A\bowtie I$, we have $((a,a), (0,a))^{2}=((0,a)^{2})$ or $((a,a)^{2})$. The two cases yield, respectively, $a^{2}=0$ or $a^{2}(1-i)=0$ for some $i\in I\subseteq \m$. It follows that $a^{2}=0$. Likewise, $b^{2}=0$. Now appeal to the Gaussian property in $A$ to get $ab=0$. To prove the last statement, let $a\in A$ and $i\in I$. In $A\bowtie I$, we have $((a,a), (0,i))^{2}=((a,a)^{2})$ since $I^{2}=0$. It follows that $ai=a^{2}j$ for some $j\in I$. That is, $aI=a^{2}I$, as desired.

Conversely, let $(a,a+i), (b,b+j)\in A\bowtie I$. Since $A$ is local Gaussian, then, say, $(a, b)^{2}=(a^{2})$. Hence $b^{2}=a^{2}x$ and $ab=a^{2}y$ for some $x,y\in A$. Moreover, $ab=0$ implies $b^{2}=0$. By assumption, there exist $i_{1},i_{2},i_{3},j_{1},j_{2}\in I$ such that $2bj=a^{2}xj_{1}$, $2axi=a^{2}i_{1}$, $aj=a^{2}j_{2}$, $bi=a^{2}xi_{2}$, and $2ayi=a^{2}i_{3}$. Using the fact $I^{2}=0$, simple calculations show that $(b,b+j)^{2}=(a,a+i)^{2}(x,x+xj_{1}-i_{1})$ and $(a,a+i)(b,b+j)=(a,a+i)^{2}(y,y+xi_{2}+j_{2}-i_{3})$. Further, assume $(a,a+i)(b,b+j)=0$. Hence $ab=0$, whence $b^{2}=0$ and $2bj=0$ since $bI=0$. So that $(b,b+j)^{2}=0$. Therefore, $A\bowtie I$ is (local) Gaussian, completing the proof of (2).

(3) Without loss of generality, we assume that $A\bowtie I$ is a local fqp-ring that is not a chained ring (i.e., $I\not= 0$). By Lemma~\ref{fqp3}, $A$ is an  fqp-ring (and hence a Pr\"ufer ring). So $\Ze(A)$ is a (prime) ideal of $A$. Moreover, by (2), $aI=a^{2}I$ for every $a\in A$. In particular,  $I=aI$ for every regular element $a$ of $A$ and, hence, $I\subseteq \Ze(A)$ by Remark~\ref{P5.1}. So, by Lemma~\ref{P4}, $\Ze(A\bowtie I)=\Ze(A)\bowtie I$. Finally, (1) combined with Lemmas \ref{fqp4} \& \ref{fqp5} yield $\big(\Ze(A\bowtie I)\big)^{2}=0$. Consequently, $\big(\Ze(A)\big)^{2}=0$.

Conversely, assume that $A$ is a Pr\"ufer ring, $\big(\Ze(A)\big)^{2}=0$, and $aI=a^{2}I$ for every element $a$ in $A$. We aim to prove that $A\bowtie I$ is an fqp-ring. Throughout the proof, we will also be using the basic facts $I\subseteq \Ze(A)$, $I^2=0$, and $I=aI\ \forall\ a\in A\setminus\Ze(A)$.

Let $(a,a+i)$ and $(b,b+j)$ be two nonzero \emph{incomparable} elements of $A\bowtie I$.

\begin{claim} $a,b \in \Ze(A)$ \end{claim}

\noindent Indeed, assume, by way of contradiction, that $a$ is regular in $A$. By Lemma~\ref{P3.2}, $(a)$ and $(b)$ are comparable. Suppose $b=ac$ for some $c\in A$. There is $k\in I\ (=aI)$ such that $j-ci=ak$ which leads to $(b,b+j)=(a,a+i)(c,c+k)$, absurd. Now, if $a=bc$ for some $c\in A$, necessarily, $b$ is regular and hence similar arguments lead to the same absurdity, proving the claim.

\begin{claim} $\Ann(a,a+i)=\Ann(b,b+j)$ and $\big((a,a+i)\big)\cap\big((b,b+j)\big)=0$.\end{claim}

\noindent Clearly, $\Ann(a,a+i)\subseteq \Ze(A\bowtie I)=\Ze(A)\bowtie I$ by Lemma~\ref{P4}. The reverse inclusion is straight in view of Claim 1. So $\Ann(a,a+i)=\Ze(A)\bowtie I=\Ann(b,b+j)$, as desired. It remains to show that $\big((a,a+i)\big)\cap\big((b,b+j)\big)=0$. For this purpose, let  $(x,x+h)$ and $(y,y+k)$  be two elements of $A\bowtie I$ such that  $$(a,a+i)(x,x+h)=(b,b+j)(y,y+k).$$ We get via Claim 1 $$ax=by\ \text{ and }\ xi=yj.$$ We claim that  $x$ or $y \in \Ze(A)$. Deny and assume, by way of contradiction, that both $x$ and $y$ are regular in $A$. By Lemma~\ref{P3.2}, $xA$ and $yA$ are comparable; say, $x=ry$ for some $r\in A$. So $ary=by$ and $ryi=yj$ yield $b=ra$ and $j=ri$. It follows that $(b,b+j)=(a,a+i)(r,r)$, the desired contradiction. Consequently,  $x$ or $y \in \Ze(A)$. This forces $ax=by=xi=yj=0$, completing the proof of the claim.

Finally, let $J$ be a finitely generated ideal of $A\bowtie I$ with a minimal generating set $\big\{(a_{1},a_{1}+i_{1}), \ldots, (a_{n},a_{n}+i_{n})\big\}$. By Claim 2, we obtain $$\Ann(a_{h},a_{h}+i_{h})=\Ann(a_{k},a_{k}+i_{k}),\ \forall\ h\not=k\in\{1, \ldots, n\};$$
$$J=\big((a_{1},a_{1}+i_{1})\big)\oplus \big((a_{2},a_{2}+i_{2})\big)\oplus \ldots \oplus\big((a_{n},a_{n}+i_{n})\big).$$
Therefore $\big((a_{h},a_{h}+i_{h})\big)\cong \big((a_{k},a_{k}+i_{k})\big)$ and so $\big((a_{h},a_{h}+i_{h})\big)$ is $\big((a_{k},a_{k}+i_{k})\big)$-projective, for all $h, k$. By \cite[Corollary 1.2]{FH}, $J$ is quasi-projective, completing the proof of the theorem.
\end{proof}

%%%%%%%%%%%%%%%%%%%%%%%%%%%%%%%%%%%%%%%%%%%%%%%%%%%%%%%%%%%%%%%%%%%%%%
%%%%%%%%%%%%%%%%%%%%%%%%%%%%%%%%%%%%%%%%%%%%%%%%%%%%%%%%%%%%%%%%%%%%%%
\begin{remark}\label{fqp6} It is worthwhile noting that, in Theorem~\ref{AGfqp1}(2-3), the two facts $I^{2}=0$ and $(\Ze(A))^{2}=0$ are independent of the assumption ``$aI=a^{2}I,\ \forall\ a\in A$." For instance, this latter does not hold for the chained ring $A$ and ideal $I$ given in Remark~\ref{P5.1}(2); though $I^{2}=(\Ze(A))^{2}=0$ since $I\subset\Ze(A)=0\ltimes \Q$. Conversely, let $A:=\frac{\Z}{8\Z}$ and $I:=\frac{4\Z}{8\Z}$. One can verify that $(\Ze(A))^{2}=I\not=0$ and $aI=a^{2}I=0,\ \forall\ a\in \frac{2\Z}{8\Z}$.
\end{remark}

The next corollary handles the global case.

%%%%%%%%%%%%%%%%%%%%%%%%%%%%%%%%%%%%%%%%%%%%%%%%%%%%%%%%%%%%%%%%%%%%%%
%%%%%%%%%%%%%%%%%%%%%%%%%%%%%%%%%%%%%%%%%%%%%%%%%%%%%%%%%%%%%%%%%%%%%%
\begin{corollary}\label{AGfqp2}
Let $A$ be a ring and $I$ a proper ideal of $A$. Then:
\begin{enumerate}
\item $A\bowtie I$ is arithmetical if and only if $A$ is arithmetical and $I_{\m}=0,\ \forall\ \m\in\Max(A,I)$.
\item $A\bowtie I$ is locally an fqp-ring if and only if $A$ is locally an fqp-ring, $\big(\Ze(A_{\m})\big)^{2}=0$, and $aI_{\m}=a^{2}I_{\m},\ \forall\ \m\in\Max(A,I)$ and $\forall\ a\in \m$.
\item $A\bowtie I$ is Gaussian if and only if $A$ is Gaussian, $I_{\m}^{2}=0$, and $aI_{\m}=a^{2}I_{\m},\ \forall\ \m\in\Max(A,I)$ and $\forall\ a\in \m$.
\end{enumerate}
\end{corollary}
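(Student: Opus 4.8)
The plan is to deduce the global statement from Theorem~\ref{AGfqp1} by localizing $A\bowtie I$ at its maximal ideals and using the local description of the amalgamation recorded in Remark~\ref{P5}. The preliminary step is to pin down $\Max(A\bowtie I)$ together with the relevant localizations. Each element $(0,i)$ with $i\in I$ satisfies the identity $(0,i)^{2}=(i,i)(0,i)$, so $A\bowtie I$ is integral over the diagonal copy of $A$; hence every (maximal) prime of $A\bowtie I$ contracts to a (maximal) prime of $A$, and by Remark~\ref{P5} the maximal ideals of $A\bowtie I$ are precisely the ideals $\tilde{\m}$ and $\m\bowtie I$ with $\m\in\Max(A)$, these two coinciding exactly when $\m\in\Max(A,I)$. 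Moreover $(A\bowtie I)_{\tilde{\m}}\cong(A\bowtie I)_{\m\bowtie I}\cong A_{\m}$ when $I\nsubseteq\m$, while $(A\bowtie I)_{\m\bowtie I}\cong A_{\m}\bowtie I_{\m}$ when $\m\in\Max(A,I)$, in which case $I_{\m}$ is a proper ideal of the local ring $A_{\m}$.

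Next I would carry out the reduction. The arithmetical and Gaussian properties are local, and since the fqp property is stable under localization \cite[Lemma 3.6]{AJK}, a ring is locally an fqp-ring if and only if each of its localizations at a maximal ideal is an fqp-ring. Combining this with the first paragraph, $A\bowtie I$ is arithmetical (resp.\ Gaussian, resp.\ locally an fqp-ring) if and only if, for every $\m\in\Max(A)$, the ring $A_{\m}$ has the corresponding property when $\m\notin\Max(A,I)$ and the ring $A_{\m}\bowtie I_{\m}$ has it when $\m\in\Max(A,I)$. For $\m\in\Max(A,I)$ I then apply Theorem~\ref{AGfqp1} to the local pair $(A_{\m},I_{\m})$: $A_{\m}\bowtie I_{\m}$ is arithmetical iff $A_{\m}$ is arithmetical and $I_{\m}=0$; it is Gaussian iff $A_{\m}$ is Gaussian, $I_{\m}^{2}=0$, and $aI_{\m}=a^{2}I_{\m}$ for all $a\in\m A_{\m}$; and it is an fqp-ring iff $A_{\m}$ is an fqp-ring (equivalently a Pr\"ufer ring), $(\Ze(A_{\m}))^{2}=0$, and $aI_{\m}=a^{2}I_{\m}$ for all $a\in\m A_{\m}$. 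Gathering these equivalences over all $\m\in\Max(A)$, using that ``$A_{\m}$ has property $\mathcal{P}$ for every $\m\in\Max(A)$'' literally means ``$A$ has property $\mathcal{P}$'' for $\mathcal{P}$ among arithmetical, Gaussian, and locally fqp, and observing that ``$bI_{\m}=b^{2}I_{\m}$ for all $b\in\m A_{\m}$'' is equivalent to ``$aI_{\m}=a^{2}I_{\m}$ for every $a\in\m$'' (every $b\in\m A_{\m}$ being of the form $a/s$ with $a\in\m$ and $s$ a unit of $A_{\m}$), yields the three asserted characterizations.

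The step demanding the most care — rather than a genuine obstacle — is the preliminary identification of $\Max(A\bowtie I)$ and of the attached localizations. One must in particular confirm that a maximal ideal $\m\bowtie I$ with $I\nsubseteq\m$ localizes to $A_{\m}$, and not to an amalgamation, so that it imposes nothing beyond ``$A_{\m}$ has the property''; this is exactly the first isomorphism of Remark~\ref{P5}, used together with the integrality of $A\bowtie I$ over $A$. One should also check that $I_{\m}$ is truly a proper ideal of $A_{\m}$ whenever $\m\in\Max(A,I)$, so that Theorem~\ref{AGfqp1} genuinely applies to $(A_{\m},I_{\m})$, and keep in mind that $\Ze(A_{\m})$ in part (2) denotes the set of zero divisors of the local ring $A_{\m}$ itself — precisely the object produced in Theorem~\ref{AGfqp1}(3) through Lemmas~\ref{fqp4} and \ref{fqp5} — not the localization of $\Ze(A)$.
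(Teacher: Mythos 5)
Your proposal is correct and follows essentially the same route as the paper: the paper's (much terser) proof likewise reduces to Theorem~\ref{AGfqp1} by localizing at the maximal ideals of $A\bowtie I$, using the isomorphisms $(A\bowtie I)_{\m\bowtie I}\cong A_{\m}\bowtie I_{\m}$ for $I\subseteq\m$ and $(A\bowtie I)_{\tilde{\m}}\cong(A\bowtie I)_{\m\bowtie I}\cong A_{\m}$ for $I\nsubseteq\m$ from Remark~\ref{P5}, together with the locality of the arithmetical and Gaussian properties and the stability of the fqp property under localization. Your write-up merely supplies details the paper leaves implicit (the identification of $\Max(A\bowtie I)$ via integrality, properness of $I_{\m}$, and the equivalence of the condition $aI_{\m}=a^{2}I_{\m}$ over $a\in\m$ versus $a\in\m A_{\m}$).
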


%%%%%%%%%%%%%%%%%
%%%%%%%%%%%%%%%%%
\begin{proof}
Let $\m\in\Max(A)$. By Remark~\ref{P5}, $(A\bowtie I)_{\m\bowtie I}\cong A_{\m}\bowtie I_{\m}$ if $I\subseteq \m$ and $(A\bowtie I)_{\m\bowtie I}\cong A_{\m}$ if $I\nsubseteq \m$. So Theorem~\ref{AGfqp1} combined with the known facts that the arithmetical and Gaussian properties are local leads to the conclusion.
\end{proof}

As an application of Corollary~\ref{AGfqp2}, one can easily construct new examples of non-local arithmetical, fqp, or Gaussian rings as shown below. The ``non-local" assumption here is meant to discriminate against the family of local Pr\"ufer-like rings which can be built via \cite[Theorem 4.4]{AJK} or \cite[Theorem 3.1]{BKM}.

%%%%%%%%%%%%%%%%%%%%%%%%%%%%%%%%%%%%%%%%%%%%%%%%%%%%%%%%%%%%%%%%%%%%%%
%%%%%%%%%%%%%%%%%%%%%%%%%%%%%%%%%%%%%%%%%%%%%%%%%%%%%%%%%%%%%%%%%%%%%%
\begin{example}\label{fqp7} Let $A:=\frac{\Z}{12\Z}$, $\m_{1}:=2A$, $\m_{2}:=3A$, and $I:=\m_{1}\m_{2}$. Then $A\bowtie I=A\ltimes I$ is locally an fqp ring which is not arithmetical. Indeed, $\left(\Ze(A_{\m_{i}})\right)^{2}=\m_{i}^{2}A\m_{i}=0$ (for $i=1,2$); $I_{\m_{1}}=6A_{\m_{1}}\not=0$; $I_{\m_{2}}=0$; and readily  $aI_{\m_{1}}=a^{2}I_{\m_{1}}=0,\ \forall\ a\in\m_{1}$.
\end{example}

%%%%%%%%%%%%%%%%%%%%%%%%%%%%%%%%%%%%%%%%%%%%%%%%%%%%%%%%%%%%%%%%%%%%%%
%%%%%%%%%%%%%%%%%%%%%%%%%%%%%%%%%%%%%%%%%%%%%%%%%%%%%%%%%%%%%%%%%%%%%%
\begin{question}\label{fqp8}
Is there a satisfactory global analogue of Theorem~\ref{AGfqp1}(3) for the fqp property (i.e, $A$ not necessarily local)?
\end{question}

%%%%%%%%%%%%%%%%%%%%%%%%%%%%%%%%%%%%%%%%%%%%%%%%%%%%%%%%%%%%%%%%%%%%%%%%%%%%%%%%%%%%%%%%%%%%%%%%%%%%%%%%%%%%%%%%%%%%%%%%%%%%%%%%%%%%%%%%%%%%%%
%%%%%%%%%%%%%%%%%%%%%%%%%%%%%%%%%%%%%%%%%%%%%%%%%%%%%%%%%%%%%%%%%%%%%%%%%%%%%%%%%%%%%%%%%%%%%%%%%%%%%%%%%%%%%%%%%%%%%%%%%%%%%%%%%%%%%%%%%%%%%%
%%%%%%%%%%%%%%%%%%%%%%%%%%%%%%%%%%%%%%%%%%%%%%%%%%%%%%%%%%%%%%%%%%%%%%%%%%%%%%%%%%%%%%%%%%%%%%%%%%%%%%%%%%%%%%%%%%%%%%%%%%%%%%%%%%%%%%%%%%%%%%
%%%%%%%%%%%%%%%%%%%%%%%%%%%%%%%%%%%%%%%%%%%%%%%%%%%%%%%%%%%%%%%%%%%%%%%%%%%%%%%%%%%%%%%%%%%%%%%%%%%%%%%%%%%%%%%%%%%%%%%%%%%%%%%%%%%%%%%%%%%%%%
%%%%%%%%%%%%%%%%%%%%%%%%%%%%%%%%%%%%%%%%%%%%%%%%%%%%%%%%%%%%%%%%%%%%%%%%%%%%%%%%%%%%%%%%%%%%%%%%%%%%%%%%%%%%%%%%%%%%%%%%%%%%%%%%%%%%%%%%%%%%%%
\section{The weak global dimension and transfer of the semihereditary condition}\label{WS}

A ring $R$ is semihereditary if every finitely generated ideal of $R$ is projective \cite{CE}. Recall for convenience that a ring $R$ has weak global dimension at most 1 (denoted $\w.dim(R)\leq1$) if and only if every finitely generated ideal of $R$ is flat if and only if $R_{p}$ is a valuation domain for every prime ideal $p$ of $R$ \cite[Theorem 3.4]{BG}. Therefore, if $\w.dim(R)\leq1$, then $R$ is arithmetical. Also, if $R$ is semihereditary, then $\w.dim(R)\leq1$; and the converse holds if $R$ is coherent (i.e., every finitely generated ideal is finitely presented) \cite[Theorem 3.3]{BG}.

The main result of this section investigates the weak global dimension of an amalgamation and its possible inheritance of the semihereditary condition.

%%%%%%%%%%%%%%%%%%%%%%%%%%%%%%%%%%%%%%%%%%%%%%%%%%%%%%%%%%%%%%%%%%%%%%
%%%%%%%%%%%%%%%%%%%%%%%%%%%%%%%%%%%%%%%%%%%%%%%%%%%%%%%%%%%%%%%%%%%%%%
\begin{thm}\label{WS1}
Let $A$ be a ring and $I$ a proper ideal of $A$. Then:
\begin{enumerate}
\item $\w.dim(A\bowtie I)\leq 1$ if and only if $\w.dim(A)\leq 1$ and $I_{\m}=0,\ \forall\ \m\in\Max(A,I)$.
\item Assume $I$ is finitely generated. Then $A\bowtie I$ is semihereditary if and only if $A$ is semihereditary and $I_{\m}=0,\ \forall\ \m\in\Max(A,I)$.
\end{enumerate}
\end{thm}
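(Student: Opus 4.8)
\textbf{Proof plan for Theorem~\ref{WS1}.}

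The plan is to reduce both statements to a local computation via the description of the localizations of $A\bowtie I$ given in Remark~\ref{P5}, exactly as was done for Corollary~\ref{AGfqp2}. For any $\m\in\Max(A)$ we have $(A\bowtie I)_{\m\bowtie I}\cong A_{\m}\bowtie I_{\m}$ when $I\subseteq\m$, and $(A\bowtie I)_{\m\bowtie I}\cong A_{\m}$ when $I\nsubseteq\m$; moreover (Remark~\ref{P5}) the primes of $A\bowtie I$ not containing $I$ pull back to primes of $A$ not containing $I$, with the same localizations on both sides. Since $\w.dim(R)\leq1$ is detected on localizations at maximal ideals — $\w.dim(R)\leq1$ iff $R_{p}$ is a valuation domain for every prime $p$ — the global statement (1) will follow once we settle the \emph{local} case: for $(A,\m)$ local with $I\subseteq\m$ proper, $\w.dim(A\bowtie I)\leq1$ iff $\w.dim(A)\leq1$ and $I=0$.

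For the local case of (1), one direction is immediate: if $I=0$ then $A\bowtie I\cong A$. For the converse, suppose $\w.dim(A\bowtie I)\leq1$. Then $A\bowtie I$ is arithmetical, and since the arithmetical property descends to factor rings and $A\cong (A\bowtie I)/((0)\bowtie I)$, we get that $A$ is arithmetical; but more is true — $\w.dim(R)\leq1$ implies $R$ is a \emph{domain}-like ring locally, and in particular $R_{p}$ is a valuation domain, so $R$ is reduced. Here is the key point: $A\bowtie I$ contains the nilpotent-looking elements $(0,i)$ for $i\in I$; although $(0,i)$ need not itself be nilpotent, $A\bowtie I$ being locally a domain forces $A\bowtie I$ to have no zero divisors after localizing. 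Concretely, by Theorem~\ref{AGfqp1}(1), an amalgamation $A\bowtie I$ of a local ring is arithmetical iff $I=0$; since $\w.dim(A\bowtie I)\leq1$ implies $A\bowtie I$ arithmetical, Theorem~\ref{AGfqp1}(1) immediately gives $I=0$, and then $\w.dim(A\bowtie I)=\w.dim(A)\leq1$. So (1) in the local case is essentially a corollary of Theorem~\ref{AGfqp1}(1) together with the implication ``$\w.dim\leq1\Rightarrow$ arithmetical''. Globalizing as in the first paragraph: $\w.dim(A\bowtie I)\leq1$ iff $(A\bowtie I)_{\m\bowtie I}$ has weak dimension $\leq1$ for all $\m$, which for $I\nsubseteq\m$ says $\w.dim(A_{\m})\leq1$ and for $I\subseteq\m$ says $\w.dim(A_{\m}\bowtie I_{\m})\leq1$, i.e. (local case) $\w.dim(A_{\m})\leq1$ and $I_{\m}=0$; the conjunction over all $\m$ is precisely ``$\w.dim(A)\leq1$ and $I_{\m}=0\ \forall\,\m\in\Max(A,I)$''.

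For (2), assume $I$ finitely generated. If $A\bowtie I$ is semihereditary then $\w.dim(A\bowtie I)\leq1$, so by (1) we get $\w.dim(A)\leq1$ and $I_{\m}=0$ for all $\m\in\Max(A,I)$; it remains to upgrade ``$\w.dim(A)\leq1$'' to ``$A$ semihereditary'', for which, by \cite[Theorem 3.3]{BG}, it suffices to show $A$ is coherent. Here I would use that $A\bowtie I$ is a finitely generated $A$-module (indeed $A\bowtie I\cong A\oplus I$ as an $A$-module, with $I$ finitely generated), hence $A\bowtie I$ is a faithfully flat — or at least module-finite — extension of $A$, and coherence descends along such extensions (a module-finite faithfully flat extension is coherent iff the base is, by standard results); alternatively, since $I_{\m}=0$ for $\m\supseteq I$ one checks directly that $A\bowtie I$ is $A$-module-finite and then invokes that a ring which is module-finite over a Noetherian-like coherent base is coherent. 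Conversely, if $A$ is semihereditary and $I_{\m}=0\ \forall\,\m\in\Max(A,I)$, then $\w.dim(A)\leq1$, so by (1) $\w.dim(A\bowtie I)\leq1$; and $A\bowtie I$ is coherent because it is module-finite over the coherent ring $A$ (using $A\bowtie I\cong A\oplus I$ with $I$ finitely generated over the coherent ring $A$, hence finitely presented, hence $A\bowtie I$ is a finitely presented $A$-module and coherence transfers). Then \cite[Theorem 3.3]{BG} gives that $A\bowtie I$ is semihereditary.

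\textbf{Main obstacle.} The delicate point is the coherence bookkeeping in part (2): one must argue that $A\bowtie I$ is coherent precisely when $A$ is, under the standing hypothesis that $I$ is finitely generated. The cleanest route is to observe $A\bowtie I$ is a finitely generated (indeed finitely presented, using $\w.dim(A)\leq1$ so that $I$ is finitely presented) $A$-module and that coherence passes between a coherent ring and any ring that is a finitely presented module over it; making this precise — and checking that the finitely-generated-ideal-flat criterion for $\w.dim\leq1$ interacts correctly with the faithfully flat descent from $A\bowtie I$ to $A$ needed in the forward direction — is where the real work lies, whereas the weak-dimension statement (1) is an essentially formal consequence of Remark~\ref{P5} and Theorem~\ref{AGfqp1}(1).
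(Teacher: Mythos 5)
Your argument for part (1) is correct and is essentially the paper's: the paper likewise observes that $\w.dim\leq 1$ forces the arithmetical property, invokes the characterization of arithmetical amalgamations (Corollary~\ref{AGfqp2}(1), i.e.\ the globalized Theorem~\ref{AGfqp1}(1)) to get $I_{\m}=0$ for $\m\in\Max(A,I)$, and then uses Remark~\ref{P5} to identify every localization of $A\bowtie I$ at a maximal ideal with a localization of $A$. (Your detour through ``$\w.dim\leq1$ implies locally a domain, hence reduced'' is dispensable and slightly muddled, but you correctly fall back on Theorem~\ref{AGfqp1}(1), so nothing is lost.) For part (2) both you and the paper reduce to ``semihereditary $=$ coherent $+$ $\w.dim\leq1$'' via \cite[Theorem 3.3]{BG} and part (1); the difference is in the coherence transfer. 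The paper proves a standalone Lemma~\ref{WS2}: descent $A\bowtie I\Rightarrow A$ via the module retract $\psi(a,a+i)=a$ and \cite[Theorem 4.1.5]{G1}, and ascent $A\Rightarrow A\bowtie I$ (for $I$ finitely generated, with no flatness or semihereditarity hypothesis) by an explicit kernel computation showing $I\times 0$ is a coherent $A\bowtie I$-module and quoting \cite[Theorem 2.4.1(2)]{G1}. You instead exploit that, in the situation of the theorem, $I$ is finitely generated projective (hence finitely presented and flat), so $A\bowtie I\cong A\oplus I$ is a finitely presented, faithfully flat $A$-module, and coherence ascends to module-finite finitely presented algebras and descends along faithfully flat maps. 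This is a legitimate and arguably slicker route, but it is less general than Lemma~\ref{WS2} (it genuinely uses the Pr\"ufer hypotheses to get $I$ projective/flat), and you should drop the hedge ``or at least module-finite'': module-finiteness alone does not give descent of coherence, so in the forward direction you must explicitly secure either the faithful flatness (which requires first running part (1) to know $I$ is flat) or the module-retract structure that the paper uses.
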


The proof requires the following lemma which examines the transfer of coherence to amalgamations.

%%%%%%%%%%%%%%%%%%%%%%%%%%%%%%%%%%%%%%%%%%%%%%%%%%%%%%%%%%%%%%%%%%%%%%
%%%%%%%%%%%%%%%%%%%%%%%%%%%%%%%%%%%%%%%%%%%%%%%%%%%%%%%%%%%%%%%%%%%%%%
\begin{lemma}\label{WS2}
Let $A$ be a ring  and $I$ a proper ideal of $A$. If $A\bowtie I $ is a coherent ring, then so is $A$. The converse holds when $I$ is finitely generated.
\end{lemma}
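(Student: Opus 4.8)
The plan is to prove the two directions of Lemma~\ref{WS2} by exploiting the two natural maps relating $A$ and $A\bowtie I$, together with the standard ideal-theoretic characterization of coherence (a ring is coherent iff the annihilator of every element is finitely generated and the intersection of any two finitely generated ideals is finitely generated).

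First I would handle the direction ``$A\bowtie I$ coherent $\Rightarrow$ $A$ coherent.'' The cleanest route is to use the surjection $A\bowtie I\twoheadrightarrow A$ whose kernel is $(0)\bowtie I$, so $A\cong(A\bowtie I)/\big((0)\bowtie I\big)$. Since coherence does not pass to arbitrary quotients, I cannot invoke a general principle; instead I would observe that $(0)\bowtie I=\{(0,i)\mid i\in I\}$ is, as an $A\bowtie I$-module, annihilated by the element $(0,1)\cdot$... more precisely, one checks $(0)\bowtie I=\Ann_{A\bowtie I}\big((1,0)\big)$ up to the obvious identifications, hence it is a finitely generated ideal exactly when... — rather than pursue this, the safer and more transparent argument is: $A\bowtie I$ is a module-finite (indeed free of rank $2$ as an $A$-module via the first-coordinate projection splitting) extension of $A$, so by faithfully flat / module-finite descent of coherence — or, more elementarily, since $A$ is a retract of $A\bowtie I$ (the composite $A\hookrightarrow A\bowtie I\twoheadrightarrow A$ is the identity) and a retract of a coherent ring that is finitely generated as a module over the retract is again coherent — we conclude $A$ is coherent. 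I would spell out the retract argument with annihilators and finite intersections of ideals pulled back along $A\hookrightarrow A\bowtie I$.

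Next, for the converse, assume $A$ is coherent and $I$ is finitely generated. Here I would use the presentation $A\bowtie I\cong A\oplus I$ as an $A$-module (the isomorphism $(a,a+i)\mapsto(a,i)$), which shows $A\bowtie I$ is a finitely generated $A$-module, in fact finitely presented over $A$ once $I$ is finitely generated over the coherent ring $A$ (finitely generated ideals over coherent rings are finitely presented). Then I would invoke the standard fact that if $B$ is an $A$-algebra which is finitely presented as an $A$-module and $A$ is coherent, then $B$ is coherent: every finitely generated ideal of $B$ is a finitely generated $A$-submodule of the finitely presented $A$-module $B$, hence finitely presented over $A$ (since finitely generated submodules of finitely presented modules over a coherent ring are finitely presented), hence finitely presented over $B$ as well. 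This yields coherence of $A\bowtie I$.

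The main obstacle I anticipate is the first direction: getting from coherence of $A\bowtie I$ down to $A$ without assuming $I$ finitely generated, since quotients of coherent rings need not be coherent. The resolution is precisely that $A$ is not merely a quotient but a retract (direct summand as a ring-with-module-structure) of $A\bowtie I$, and $A\bowtie I$ is finitely generated as an $A$-module; this combination is what forces coherence to descend. I would make sure to state this descent fact carefully — e.g. via: if $M$ is a finitely presented $A\bowtie I$-module, then $M$ is finitely presented over $A$ (as $A\bowtie I$ is finitely presented over $A$), and restricting a finite presentation of a finitely generated ideal $\mathfrak{a}$ of $A$ to the summand gives finite presentability over $A$, whence over $A\bowtie I$ after tensoring back — or simply cite the relevant transfer result for module-finite extensions and retracts. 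No delicate calculation is needed beyond these module-theoretic bookkeeping steps.
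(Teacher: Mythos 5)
Your first direction has a genuine gap. You correctly land on the key structural fact — the retraction $A\hookrightarrow A\bowtie I\twoheadrightarrow A$ — which is exactly what the paper uses (citing \cite[Theorem 4.1.5]{G1}). But every justification you offer for why coherence descends along it (faithfully flat descent, module-finite descent, or ``a retract of a coherent ring that is finitely generated as a module over the retract'') rests on the claim that $A\bowtie I$ is a finitely generated, indeed free of rank $2$, $A$-module. That is false in general: as an $A$-module, $A\bowtie I\cong A\oplus I$ via $(a,a+i)\mapsto (a,i)$, so it is finitely generated over $A$ exactly when $I$ is, and flat over $A$ exactly when $I$ is. Since $I$ is \emph{not} assumed finitely generated in this direction, your argument only covers a special case. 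The repair is that $\psi(a,a+i)=a$ is a \emph{ring} homomorphism splitting the inclusion, and for such a ring retraction coherence descends with no finiteness or flatness hypothesis; concretely, if $w_{1},\dots,w_{t}$ generate the relation module of $(a_{1},a_{1}),\dots,(a_{n},a_{n})$ over $A\bowtie I$, then multiplicativity of $\psi$ gives $\psi^{n}\big(\sum_{j} s_{j}w_{j}\big)=\sum_{j}\psi(s_{j})\psi^{n}(w_{j})$, so $\psi^{n}(w_{1}),\dots,\psi^{n}(w_{t})$ generate the relation module of $a_{1},\dots,a_{n}$ over $A$. This is the content of the result the paper invokes; you need that version, not the module-finite one.

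Your converse is correct and takes a genuinely different route. The paper works inside $A\bowtie I$: it shows the ideal $I\times 0$ is a coherent $A\bowtie I$-module by computing the relation module of a generating set $(a_{1},0),\dots,(a_{n},0)$ and identifying it with $\Ker(v)\bowtie I^{n}$, where $v$ is the corresponding relation map over $A$; coherence of $A$ plus finite generation of $I$ make this finitely generated, and \cite[Theorem 2.4.1(2)]{G1} (a ring with a coherent ideal and coherent quotient is coherent) finishes. You instead note that $A\bowtie I\cong A\oplus I$ is a finitely presented $A$-module and invoke the transfer of coherence to algebras finitely presented as modules; your chain (finitely generated ideal of $A\bowtie I$ $\Rightarrow$ finitely generated $A$-submodule of a coherent $A$-module $\Rightarrow$ finitely presented over $A$ $\Rightarrow$ finitely presented over $A\bowtie I$, the last step because the kernel of $(A\bowtie I)^{n}\to\mathfrak{b}$ sits in a short exact sequence of $A$-modules with finitely generated middle and finitely presented cokernel) is sound. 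It is arguably cleaner than the paper's explicit kernel computation, at the cost of quoting or proving the module-finite transfer theorem; note that it genuinely consumes the hypothesis that $I$ is finitely generated, which is why it cannot be recycled for the first direction.
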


%%%%%%%%%%%%%%%%%
%%%%%%%%%%%%%%%%%
\begin{proof}
If $A\bowtie I$ is coherent, then $A$ is coherent, by \cite[Theorem 4.1.5]{G1}, since $A$ is a retract of $A\bowtie I$ via the retraction $\psi : A\bowtie I\longrightarrow A$ defined by $\psi(a,a+i)=a$. Conversely, assume that $A$ is coherent and $I$ is finitely generated. Recall that $I\times 0$ is an ideal of $A\bowtie I$ with $\frac{A\bowtie I}{I\times 0}\cong A$ \cite[Remark 1(b)]{D}. We claim that $I\times 0$ is $A\bowtie I$-coherent. Indeed, let $H$ be a  finitely generated subideal of $I\times 0$. We will show that $H$ is finitely presented. Clearly, $H:=\sum_{i=1}^{n}A\bowtie I(a_{i},0)$, for some positive integer $n$ and $a_i \in I$. Consider the exact sequence of $A\bowtie I -$modules $$0\rightarrow \Ker(u)\rightarrow (A\bowtie I)^{n}\stackrel{u}\rightarrow H\rightarrow 0$$ where $u(r_{i},r_{i}+e_{i})_{1\leq i\leq n}=\sum_{i=1}^{n}(r_{i},r_{i}+e_{i})(a_{i},0)=(\sum_{i=1}^{n}r_{i}a_{i},0)$. So that
$\Ker(u)=\{(r_{i},r_{i}+e_{i})_{1\leq i\leq n}\in(A\bowtie I)^{n}\mid\sum_{i=1}^{n}r_{i}a_{i}=0\}$. Now, set $J :=\sum_{i=1}^{n}Ra_{i}$, a finitely generated subideal of $I$, and consider the exact sequence of $A$-modules $$0\rightarrow \Ker(v)\rightarrow A^{n}\stackrel{v}\rightarrow J\rightarrow 0$$ where $v(b_{i})_{1\leq i\leq n}=\sum_{i=1}^{n}b_{i}a_{i}$. So, under the $A\bowtie I -$module identification $(A\bowtie I)^{n}=A^{n}\bowtie I^{n}$, we have $\Ker(u) = \Ker( v)\bowtie I^{n}$. But $J$ is finitely presented since $A$ is coherent. Hence, $\Ker(v) $ is a finitely generated $A$-module. Whence $\Ker(u)$ is a finitely generated $A\bowtie I$-module (recall $I$ is finitely generated). It follows that $H$ is finitely presented and thus $I\times 0$ is $A\bowtie I$-coherent, as claimed. By \cite[Theorem 2.4.1(2)]{G1}, $A\bowtie I$ is coherent, proving the lemma.
\end{proof}

%%%%%%%%%%%%%%%%%%%%%%%%%%%%%%%%%%%%%%%%%%%%%%%%%%%%%%%%%%%%%%%%%%%%%%%%%%%%%%%%%%%%%%%%%%%%
\begin{proof}[Proof of Theorem~\ref{WS1}]
(1) If $I_{\m}=0$ for every $\m\in\Max(A,I)$, then Remark~\ref{P5} yields $(A\bowtie I)_{\tilde{\m}}\cong(A\bowtie I)_{\m\bowtie I}\cong A_{\m}, \ \forall\ \m\in\Max(A)$, where $\tilde{\m}:=\{(x+i,x)\mid x\in \m,\ i\in I\}$. This fact combined with Corollary~\ref{AGfqp2}(1) leads to the conclusion.

(2) Combine Lemma~\ref{WS2} with (1) and the known fact that a ring is semihereditary if and only if it is coherent and has weak global dimension at most 1  \cite[Theorem 3.3]{BG}.
\end{proof}

%%%%%%%%%%%%%%%%%%%%%%%%%%%%%%%%%%%%%%%%%%%%%%%%%%%%%%%%%%%%%%%%%%%%%%
%%%%%%%%%%%%%%%%%%%%%%%%%%%%%%%%%%%%%%%%%%%%%%%%%%%%%%%%%%%%%%%%%%%%%%
\begin{example}\label{WS3}
By Theorem~\ref{WS1}, $\frac{\Z}{12\Z}\bowtie \frac{4\Z}{12\Z}=\frac{\Z}{12\Z}\ltimes \frac{4\Z}{12\Z}$ is a semihereditary ring since $(\frac{4\Z}{12\Z})_{\frac{2\Z}{12\Z}}=0$. Notice, however, that the above results do not allow for a discrimination among the classes of arithmetical rings, rings with weak global dimension at most 1, and semihereditary rings.
\end{example}

Recall at this point that Osofsky in 1969 (resp., Glaz in 2005) proved that the weak global dimension of an arithmetical (resp., a coherent Gaussian) ring is 0, 1, or $\infty$ \cite{Os} (resp., \cite[Theorem 3.3]{G2}). One can use amalgamations to build new examples of non-arithmetical non-Coherent Gaussian rings with infinite weak global dimension, as shown in the next example.

%%%%%%%%%%%%%%%%%%%%%%%%%%%%%%%%%%%%%%%%%%%%%%%%%%%%%%%%%%%%%%%%%%%%%%
%%%%%%%%%%%%%%%%%%%%%%%%%%%%%%%%%%%%%%%%%%%%%%%%%%%%%%%%%%%%%%%%%%%%%%
\begin{example}\label{WS4}
Let $A$ be a local non-coherent Gaussian ring and $0\not=I$ a proper ideal of $A$ with $I^{2}=0$ and $aI=a^{2}I\ \forall\ a\in A$. Assume $0\times I$ is not flat in $A\bowtie I$ (in particular, if $I$ is finitely generated or not flat in $A$). Then the amalgamation $R:=A\ltimes I$ is a local non-arithmetical non-coherent Gaussian ring with $\w.dim(R)=\infty$. For an explicit example, one may take $A:=\Z_{(2)}\ltimes\Q$ and $I:=0\ltimes\Q$.
\end{example}

%%%%%%%%%%%%%%%%%
%%%%%%%%%%%%%%%%%
\begin{proof}
By Theorem~\ref{AGfqp1}, Theorem~\ref{WS1}, and Lemma~\ref{WS2}, $R$ is a local non-arithmetical non-coherent Gaussian ring with $\w.dim(R)\geq2$. Next, assume $0\times I$ is not flat in $R$. Let $\{f_{i}\}_{i\in \Delta}$ be a  set of generators of $I$ and consider the $R$-map $u: R^{(\Delta)}\rightarrow 0\times I$ defined by $u(a_{i},e_{i})_{i\in \Delta}=\sum_{i\in \Delta} (a_{i},e_{i})(0,f_{i})=(0,\sum_{i\in \Delta}a_{i}f_{i})$. Clearly, $\Ker(u)=V\bowtie I^{(\Delta)}$, where $V:=\{(a_{i})_{i}\in A^{(\Delta)}/\sum_{i\in \Delta}a_{i}f_{i}=0 \}$. Here we are identifying $R^{(\Delta)}$ with $A^{(\Delta)}\bowtie I^{(\Delta)}$ as $R$-modules. We have the exact sequence of $R$ modules $$0\rightarrow V\bowtie I^{(\Delta)}\rightarrow R^{(\Delta)}\stackrel{u}\rightarrow 0\times I\rightarrow 0.$$
On the other hand, $V\bowtie I^{(\Delta)}=V^{\star}\oplus (0\times I)^{(\Delta)}$, where $V^{\star}=\{(a,a)/ r\in V\}$. Since $0\times I$ is not flat, the above sequence yields
$$\fd(0\times I)\leq \fd\left(V^{\star}\oplus (0\times I)^{(\Delta)}\right)\leq \fd(0\times I)-1.$$ Therefore $\fd(0\times I)=\w.dim(R)=\infty$, as desired.

Now, if $I$ is finitely generated, then $0\times I$ is not $R$-flat since $R$ is local and $(a,0)(0\times I)= 0$ for any $0\not=a\in I$. Also, using the interpretation of flatness in terms of relations \cite[Ch. I, \S 2, Corollary 1]{Bour}, one can easily verify that if $0\times I$ is $R$-flat, then $I$ is $A$-flat.

For the explicit example, it is readily seen that $I^{2}=0$ and $aI=a^{2}I\ \forall\ a\in 2\Z_{(2)}\ltimes\Q$. Moreover, $A$ is an arithmetical (hence Gaussian) ring by \cite[Theorem 2.1]{BKM} and not coherent by \cite[Theorem 2.8]{KM}. Finally, we claim that $I:=0\ltimes\Q$ is not flat in $A:=\Z_{(2)}\ltimes\Q$ since it is not flat in $\Q\ltimes\Q$. The evidence for this fact is already included in the proof of \cite[Lemma 2.3]{BKM}.
\end{proof}

%%%%%%%%%%%%%%%%%%%%%%%%%%%%%%%%%%%%%%%%%%%%%%%%%%%%%%%%%%%%%%%%%%%%%%
%%%%%%%%%%%%%%%%%%%%%%%%%%%%%%%%%%%%%%%%%%%%%%%%%%%%%%%%%%%%%%%%%%%%%%
\begin{question}
Is Theorem~\ref{WS1}(2) true if $I$ is not assumed to be finitely generated?
\end{question}

%%%%%%%%%%%%%%%%%%%%%%%%%%%%%%%%%%%%%%%%%%%%%%%%%%%%%%%%%%%%%%%%%%%%%%%%%%%%%%%%%%%%%%%%%%%%%%%%%%%%%%%%%%%%%%%%%%%%%%%%%%%%%%%%%%%%%%%%%%%%%%
%%%%%%%%%%%%%%%%%%%%%%%%%%%%%%%%%%%%%%%%%%%%%%%%%%%%%%%%%%%%%%%%%%%%%%%%%%%%%%%%%%%%%%%%%%%%%%%%%%%%%%%%%%%%%%%%%%%%%%%%%%%%%%%%%%%%%%%%%%%%%%
%%%%%%%%%%%%%%%%%%%%%%%%%%%%%%%%%%%%%%%%%%%%%%%%%%%%%%%%%%%%%%%%%%%%%%%%%%%%%%%%%%%%%%%%%%%%%%%%%%%%%%%%%%%%%%%%%%%%%%%%%%%%%%%%%%%%%%%%%%%%%%
%%%%%%%%%%%%%%%%%%%%%%%%%%%%%%%%%%%%%%%%%%%%%%%%%%%%%%%%%%%%%%%%%%%%%%%%%%%%%%%%%%%%%%%%%%%%%%%%%%%%%%%%%%%%%%%%%%%%%%%%%%%%%%%%%%%%%%%%%%%%%%
%%%%%%%%%%%%%%%%%%%%%%%%%%%%%%%%%%%%%%%%%%%%%%%%%%%%%%%%%%%%%%%%%%%%%%%%%%%%%%%%%%%%%%%%%%%%%%%%%%%%%%%%%%%%%%%%%%%%%%%%%%%%%%%%%%%%%%%%%%%%%%

\end{document}